\numberwithin{equation}{section}
\numberwithin{figure}{section}
\theoremstyle{plain}
\newtheorem{thm}{Theorem}[section]
\theoremstyle{plain}
\newtheorem{lem}[thm]{Lemma}
\theoremstyle{remark}
\newtheorem{rem}[thm]{Remark}
\theoremstyle{plain}
\newtheorem{cor}[thm]{Corollary}
\theoremstyle{definition}
\newtheorem{defn}[thm]{Definition}
\theoremstyle{definition}
\theoremstyle{definition}
\theoremstyle{plain}
\newtheorem{prop}[thm]{Proposition}
\theoremstyle{plain}
\theoremstyle{definition}
\theoremstyle{plain}
\newcommand{\comments}[1]{}
\newcommand{\ra}{\rightarrow}
\newcommand{\rab}{\rangle}
\newcommand{\lab}{\langle}
\newcommand{\mcal}{\mathcal}
\newcommand{\N}{\mathbb N}
\newcommand{\C}{\mathbb{C}}
\newcommand{\R}{\mathbb{R}}
\newcommand{\vlon}{\varepsilon}
\title{Positivstellensatz for C*-tensor categories}
\author{Kajal Das, Mainak Ghosh and Shamindra Ghosh}
\newcommand{\Contact}{{
		\bigskip
		\footnotesize
				
		Mainak Ghosh, \textsc{Beijing Institute of Mathematical Sciences and Applications}\par\nopagebreak
		\textit{E-mail address}: \texttt{main\_ghosh@rediffmail.com} , \texttt{mainakghosh@bimsa.cn}
		
		\medskip
		
		Shamindra Ghosh, \textsc{Stat-Math Unit, Indian Statistical Institute}\par\nopagebreak
		\textit{E-mail address}: \texttt{shamindra.isi@gmail.com}
		
		\medskip
		
		Kajal Das, \textsc{Stat-Math Unit, Indian Statistical Institute}\par\nopagebreak
		\textit{E-mail address}: \texttt{kdas.math@gmail.com}
		
	}}
\begin{document}
	\maketitle
	\global\long\def\vlon{\varepsilon}
	\global\long\def\bt{\bowtie}
	\global\long\def\ul#1{\underline{#1}}
	\global\long\def\ol#1{\overline{#1}}
	\global\long\def\norm#1{\left\|{#1}\right\|}
	\global\long\def\os#1#2{\overset{#1}{#2}}
	\global\long\def\us#1#2{\underset{#1}{#2}}
	\global\long\def\ous#1#2#3{\overset{#1}{\underset{#3}{#2}}}
	\global\long\def\t#1{\text{#1}}
	\global\long\def\lrsuf#1#2#3{\vphantom{#2}_{#1}^{\vphantom{#3}}#2^{#3}}
	\global\long\def\tr{\triangleright}
	\global\long\def\tl{\triangleleft}
	\global\long\def\cc90#1{\begin{sideways}#1\end{sideways}}
	\global\long\def\turnne#1{\begin{turn}{45}{#1}\end{turn}}
	\global\long\def\turnnw#1{\begin{turn}{135}{#1}\end{turn}}
	\global\long\def\turnse#1{\begin{turn}{-45}{#1}\end{turn}}
	\global\long\def\turnsw#1{\begin{turn}{-135}{#1}\end{turn}}
	\global\long\def\fusion#1#2#3{#1 \os{\textstyle{#2}}{\otimes} #3}
	
	\global\long\def\abs#1{\left|{#1}\right	|}
	\global\long\def\red#1{\textcolor{red}{#1}}

\begin{abstract}
 We explore semi-pre-C*-algebras in the context of rigid semisimple C*-tensor categories and using techniques from annular representations, we extend Ozawa's criterion for property (T) in groups to this context. \comments{\sout{for rigid C*-tensor categories. We use techniques from the theory of semi-pre-C*-algebras and annular representations of rigid C*-tensor categories to prove our results}}
\end{abstract}

\section{Introduction}

Rigid C*-tensor categories, introduced in their modern form by R. Longo and J. Roberts \cite{LR}, are structures which can provide a unifying framework for symmetries appearing in a variety of contexts such as representation category of compact quantum groups \cite{NT}, DHR super-selection sectors in algebraic quantum field theory \cite{H96}, bifinite bimoudules over $ II_1 $-factors \cite{J83}, etc. In the last three decades, there has been a great deal of interest in approximation and rigidity properties for rigid, semisimple C*-tensor categories and subfactors.

\vspace*{2mm}
 
S. Popa introduced the concepts of approximation and rigidity properties for subfactors \cite{P94a,P94b,P96,P99}. Subsequently, approximation and rigidity were translated from Popa's original definitions into the categorical setting by Popa and Vaes in \cite{PV}; they introduce a representation theory for standard invariants and rigid C*-tensor categories generalizing the unitary representation theory of groups, encoding the analytical properties in a natural way.

\vspace*{2mm}

Property (T) is one of the two most important concepts in analytic group theory, the other being amenability. Suppose $\Gamma$ is a finitely generated group and $\mu$ be a probability measure on $\Gamma$ whose support is finite and symmetric, and generates $\Gamma$.
Then, the corresponding Laplacian element $\Delta$ is the positive element in the real group algebra $\R[\Gamma]$ defined by 
\[ 
\Delta = \frac{1}{2} \underset{x \in \Gamma}{\sum} \mu(x)\left(1-x \right)^* \left(1-x \right) = 1 - \underset{x \in \Gamma}{\sum} \mu(x) x .
\] 
It is well-known (see \cite{BHV08}) that $\Gamma$ has property (T) if and only if $\Delta$ has a spectral gap, that is, there exists a scalar $ k > 0 $ such that for every orthogonal $\Gamma$-representation $\pi$ on a real Hilbert space $H$, the spectrum of $\pi(\Delta) \in \mcal L(H)$ is contained in $\left\{ 0 \right\} \cup \left[k, \infty \right)$. 
It turns out $ \Delta^2 - k \Delta \geq 0 $ in the full group C*-algebra $C^* (\Gamma)$. In \cite{Oz16}, N. Ozawa proved that $\Delta^2 - k \Delta \geq 0 $ is witnessed in the algebra $\R [\Gamma]$.

\vspace*{2mm}

Our main goal in this paper is to extend Ozawa's criterion for property (T) in groups to the context of rigid, semisimple C*-tensor categories with countably many irreducibles and simple tensor unit. Our approach has been based on techniques from noncommutative real algebraic geometry (as in \cite{S08,Oz13}). One of the most important component in the realm of noncommutative algebraic geometry are semi-pre-C*-algebras (a kind of generalization of unital C*-algebras). In  \cite{Oz13}, N. Ozawa formulated \textit{Connes' embedding conjecture} using the theory of representations of semi-pre-C*-algebras. The theory of semi-pre-C*-algebras in the context of group algebras has been studied in \cite{NeTh13}.

\vspace*{2mm}

We study the fusion algebra $\C [\mcal C]$ of a rigid, semisimple C*-tensor category $ \mcal C $ under the lens of semi-pre-C*-algebras. We are unable to find any such treatment in the literature before.
The first obvious question is whether the fusion algebra becomes a semi-pre-C*-algebra with respect to the canonical cone (that is, finite sum of elements of the form $x^* x$ for $x \in \C[\mcal C]$) .
It appears that this does not hold all the time.
This led us to the formulation of \textit{tube cone} (denoted by $\Sigma^2 \mcal C$) using Ocneanu's tube algebra and the annular representations of $ \mcal C $ \cite{O94,GhJ}.
When one is working with \textit{$*$-positive cones}, it is natural to ask \cite{NeTh13} the following questions:
\begin{itemize}\label{order unit}
	\item [(1)] Is the tube cone \textit{salient} ?
	\item [(2)] Is the unit of the fusion algebra an \textit{order unit} with respect to the tube cone ?  
\end{itemize}
We answer these questions in the affirmative and thereafter, have the first main result.
\begin{thm}
	The fusion algebra is a semi-pre-C*-algebra with respect to the tube cone
\end{thm}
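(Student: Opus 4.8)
The plan is to check that $(\C[\mcal C], \Sigma^2\mcal C)$ satisfies the axioms of a semi-pre-C*-algebra one by one, isolating the two that carry real content. Recall that a unital $*$-algebra $A$ with a cone $A_+ \subseteq A_{sa}$ is a semi-pre-C*-algebra when: $A_+$ is closed under addition and nonnegative scaling; $x^*x \in A_+$ and more generally $x^* A_+ x \subseteq A_+$ for all $x \in A$; the cone is salient, $A_+ \cap (-A_+) = \{0\}$; and the unit is an Archimedean order unit, i.e. for each self-adjoint $a$ there is $\lambda > 0$ with $\lambda 1 - a \in A_+$, the cone being closed for the resulting order-unit seminorm. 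The first block of axioms is formal; the salience and the order-unit property are exactly Questions (1) and (2) and constitute the real work.

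First I would dispose of the formal axioms. That $\Sigma^2\mcal C$ is a cone containing every $x^*x$ is built into its definition as (the fusion-algebra part of) the sums of squares in Ocneanu's tube algebra. Stability under $a \mapsto y^* a y$ for $y \in \C[\mcal C]$ follows from the identity $y^*\big(\textstyle\sum_i x_i^* x_i\big)y = \sum_i (x_i y)^*(x_i y)$ together with the compatibility of the fusion algebra's embedding in the tube algebra with the involution and product; that the tensor unit is the algebra unit and lies in the cone is immediate. None of this presents an obstruction.

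For salience I would use a faithful trace. The tube algebra carries a canonical (categorical) trace $\tau$ with $\tau(x^*x) \geq 0$, vanishing only at $x = 0$ after passing to the appropriate completion. If $a$ and $-a$ both lie in $\Sigma^2\mcal C$, writing $a = \sum_i x_i^* x_i$ and $-a = \sum_j y_j^* y_j$ and applying $\tau$ forces every $\tau(x_i^* x_i)$ and $\tau(y_j^* y_j)$ to vanish, whence $a = 0$ by faithfulness. Equivalently, one invokes the joint faithfulness of the annular representations from \cite{GhJ}: $\pi(a) \geq 0$ and $\pi(a) \leq 0$ in every annular representation $\pi$ give $\pi(a) = 0$ for all $\pi$, hence $a = 0$.

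The hard part will be the order-unit property, and this is precisely where the tube cone earns its keep: for the naive cone of sums of squares \emph{within} $\C[\mcal C]$ the element $\lambda 1 - a$ need not be a sum of squares, which is why that cone fails to make the fusion algebra a semi-pre-C*-algebra. My approach would be to bound, for self-adjoint $a = \sum_X c_X [X]$ of finite support, the operator norm $\|\pi(a)\|$ uniformly over all annular representations $\pi$ by $\|a\|_u := \sum_X |c_X|\, d(X)$, using that each simple $[X]$ acts with norm at most its intrinsic dimension $d(X)$ — here rigidity and semisimplicity with countably many irreducibles enter. For $\lambda > \|a\|_u$ the element $\lambda 1 - a$ is then strictly positive in every annular representation, and the decisive step is to certify that such representation-positivity is witnessed by membership in $\Sigma^2\mcal C$; this is the tube-algebra Positivstellensatz direction, obtained by a GNS/Hahn--Banach argument identifying the states of the universal completion of the tube algebra with annular representations. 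Archimedean closedness of $\Sigma^2\mcal C$ then drops out of the same identification, completing the verification.
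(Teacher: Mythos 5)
Your reduction of the theorem to the order-unit/boundedness property is the right focus: with the paper's definition (following Ozawa), the theorem is precisely the statement that every $b\in\C[\mcal C]$ satisfies $b^*\cdot b\le R\,\mathbbm 1$ relative to $\Sigma^2\mcal C$ (salience and closedness of the cone are not part of that definition, though the paper proves salience separately via the faithful functional $\Omega$, essentially as you do). The problem is that your argument for the decisive step is circular. You propose to show that $\lambda\mathbbm 1-a$ is positive in every annular representation and then to ``certify that such representation-positivity is witnessed by membership in $\Sigma^2\mcal C$'' via a GNS/Hahn--Banach Positivstellensatz. But that implication --- positivity in all cone-positive representations implies membership in an $\epsilon$-fattening of the cone --- is exactly \cite[Proposition 14]{S08}, and it takes the Archimedean/order-unit property of the cone as a \emph{hypothesis}: without an order unit, a functional nonnegative on $\Sigma^2\mcal C$ need not be bounded and the GNS construction need not yield bounded operators, so the separation argument does not close. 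You cannot invoke this machinery to establish the order-unit property itself. Likewise, your input that each simple object acts with norm at most its dimension is, in the paper, a \emph{consequence} of the explicit cone membership (the corollary $z^*\cdot z\le d(z)^2\mathbbm 1$), not an independent prior fact; your logical order is reversed relative to what can actually be proved first.

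What is missing, and what the paper supplies, is an explicit sum-of-squares certificate. For $b=\Psi^z_{\mathbbm 1,x}(\gamma)\in\mcal A_{\mathbbm 1,x}$ one computes $b^*\cdot b=\Psi^{z\otimes\ol z}_{\mathbbm 1,\mathbbm 1}(\sigma)$ with $\sigma$ a positive element of the finite-dimensional C*-algebra $\t{End}(z\otimes\ol z)$, sets $\tau=\left[\norm{\sigma}1_{z\otimes\ol z}-\sigma\right]^{1/2}$ there, and expands over orthonormal bases of $\mcal C(y,z\otimes\ol z)$ for $y\in\textbf{Irr}$ to produce finitely many $c_{y,\alpha}\in\mcal A_{\mathbbm 1,y}$ with $\sum_{y,\alpha}c^*_{y,\alpha}c_{y,\alpha}=\norm{\sigma}\,d(z)\,\mathbbm 1-b^*\cdot b$. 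This purely finite-dimensional functional calculus is the whole content of the theorem; to repair your proof you would need to exhibit such an explicit witness rather than appeal to a representation-theoretic duality that presupposes the conclusion.
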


We then turn our attention to prove a version of Ozawa's criterion for property (T) of a C*-tensor category. 
The Laplacian element in the group case was trivially positive because the basis elements of the group algebra are invertible. However, in $ \mcal C $, the simple objects could very well be non-invertible and thereby positivity of the corresponding Laplacian element $ \Delta $ is not apparent.
We need to use that the unit of the fusion algebra is an order unit with respect to the tube cone to prove the following result.
\begin{prop}\label{tubeconepos}
The Laplacian element is positive in the fusion algebra with respect to the tube cone.
\end{prop}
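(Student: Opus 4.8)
The plan is to exhibit the Laplacian as a positive combination, over the (finite, symmetric) support of $\mu$, of elements each of which is positive with respect to the tube cone, and to obtain the latter positivity from the order unit property already established. Taking the Laplacian in its natural categorical form
\[
\Delta \;=\; 1 - \sum_{\pi} \mu(\pi)\,\frac{[\pi]}{d_\pi},
\]
where $d_\pi$ denotes the quantum dimension of the irreducible $\pi$ and each unitary of the group case is replaced by the normalized generator $[\pi]/d_\pi$, I first note that $\Delta$ is self-adjoint: since $d_\pi = d_{\oline{\pi}}$ and $\mu$ is symmetric, one has $\big(\sum_\pi \mu(\pi)[\pi]/d_\pi\big)^{*} = \sum_\pi \mu(\pi)[\oline{\pi}]/d_\pi = \sum_\pi \mu(\pi)[\pi]/d_\pi$ after reindexing $\oline{\pi}\mapsto\pi$. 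Grouping each $\pi$ together with its conjugate $\oline{\pi}$ then rewrites the Laplacian as
\[
\Delta \;=\; \sum_{\pi} \frac{\mu(\pi)}{d_\pi}\left(d_\pi\cdot 1 - \frac{[\pi]+[\oline{\pi}]}{2}\right).
\]

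Since $\Sigma^2\mcal C$ is a convex cone, the sum is finite, and every coefficient $\mu(\pi)/d_\pi$ is strictly positive, it suffices to prove that the self-adjoint element $h_\pi := d_\pi\cdot 1 - \tfrac12\big([\pi]+[\oline{\pi}]\big)$ belongs to $\Sigma^2\mcal C$ for each $\pi$ in the support of $\mu$; the conclusion $\Delta \in \Sigma^2\mcal C$ is then immediate. This is the decisive reduction: it converts the positivity of the (non-obvious) Laplacian into a uniform upper bound $\tfrac12\big([\pi]+[\oline{\pi}]\big) \le d_\pi\cdot 1$ for the real part of each generator in the tube order.

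To establish $h_\pi \in \Sigma^2\mcal C$ I invoke the order unit property. Proving that $1$ is an order unit for $\Sigma^2\mcal C$ amounts precisely to bounding, for every irreducible $\pi$, the real and imaginary parts $\tfrac12\big([\pi]+[\oline{\pi}]\big)$ and $\tfrac{1}{2i}\big([\pi]-[\oline{\pi}]\big)$ of each basis element by a multiple of $1$; the natural such multiple is the quantum dimension, so the order unit estimates read $d_\pi\cdot 1 \pm \tfrac12\big([\pi]+[\oline{\pi}]\big) \in \Sigma^2\mcal C$ and likewise for the imaginary part. In particular $h_\pi$ is one of these elements. Equivalently, by the description of the tube cone through annular representations, $h_\pi \in \Sigma^2\mcal C$ is the assertion that $\rho(h_\pi) \ge 0$ for every annular representation $\rho$, i.e. that the normalized generator $[\pi]/d_\pi$ acts as a contraction, $\norm{\rho([\pi])} \le d_\pi$, in every such representation.

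The \textbf{main obstacle} is the \emph{sharpness} of the constant: a generic order unit argument supplies only some finite $\lambda_\pi$ with $\lambda_\pi\cdot 1 - \tfrac12\big([\pi]+[\oline{\pi}]\big) \in \Sigma^2\mcal C$, whereas the Laplacian demands exactly $\lambda_\pi = d_\pi$. Obtaining $\norm{\rho([\pi])} \le d_\pi$ uniformly over all annular representations is where the rigidity of $\mcal C$ enters essentially --- through standard solutions of the conjugate equations and the resulting Frobenius reciprocity, which pin the norm of $[\pi]$ to its Frobenius--Perron value $d_\pi$ --- together with the positivity built into Ocneanu's tube algebra. Once this sharp bound is in hand (it is the content of the order unit result established earlier), the two displays above give $\Delta \in \Sigma^2\mcal C$, completing the proof.
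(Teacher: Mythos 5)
Your reduction is the right one and is essentially the paper's: after grouping conjugate pairs, everything rests on the single inequality $\alpha+\ol\alpha\le 2d(\alpha)\mathbbm{1}$ in the tube order for each $\alpha$ in the generating set, and the sharp constant does come from the order-unit analysis. The gap is in how you justify that inequality, i.e.\ that $h_\alpha=d(\alpha)\mathbbm{1}-\tfrac{1}{2}\left(\alpha+\ol\alpha\right)$ actually lies in $\Sigma^2\mcal C$. \Cref{orderunit} only supplies \emph{some} $R>0$ with $R\mathbbm{1}-\tfrac{1}{2}\left(\alpha+\ol\alpha\right)\in\Sigma^2\mcal C$, not the constant $d(\alpha)$; the sharp statement extracted from its proof (\Cref{alphacor}) is the quadratic bound $\ol\alpha\cdot\alpha\le d(\alpha)^2\,\mathbbm{1}$, not a bound on the real part. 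Your alternative route --- verifying $\norm{\rho(\alpha)}\le d(\alpha)$ in every annular representation --- proves too little: positivity of $h_\alpha$ in all $\Sigma^2\mcal C$-positive representations only places $h_\alpha$ in the archimedean closure of the cone (i.e.\ $h_\alpha+\epsilon\mathbbm{1}\in\Sigma^2\mcal C$ for all $\epsilon>0$, which is exactly the weakening that appears in the paper's final theorem), whereas the proposition asserts exact membership $h_\alpha\in\Sigma^2\mcal C$. Since the point of the Positivstellensatz framework is witnessed positivity in the cone rather than positivity checked representation by representation, this distinction is not cosmetic.

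The missing step is short and is what the paper actually does: the element
\[
\left(\mathbbm{1}-\tfrac{\alpha}{d(\alpha)}\right)^{*}\cdot\left(\mathbbm{1}-\tfrac{\alpha}{d(\alpha)}\right)=\mathbbm{1}+\frac{\ol\alpha\cdot\alpha}{d(\alpha)^{2}}-\frac{\alpha+\ol\alpha}{d(\alpha)}
\]
lies in $\Sigma^2\mcal C$ by the definition of the cone, and adding to it the element $\mathbbm{1}-\ol\alpha\cdot\alpha/d(\alpha)^{2}\in\Sigma^2\mcal C$ furnished by \Cref{alphacor} gives $2\,\mathbbm{1}-\left(\alpha+\ol\alpha\right)/d(\alpha)\in\Sigma^2\mcal C$, i.e.\ $h_\alpha\ge 0$. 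With this line inserted, your decomposition of $\Delta$ as a positive combination of the $h_\alpha$ completes the proof exactly as in the paper.
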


In \cite{NY16}, it has been shown that admissible representations can be understood as objects in the Drinfeld center of the ind-category. In \cite{GhJ}, the authors show that admissible representations have a natural interpretation in the annular representation theory of Jones' planar algebras \cite{J01,JR06}, and the representation theory of the tube algebra of a category, introduced by Ocneanu \cite{O94}. The fusion algebra is a corner of the tube algebra, and in \cite{GhJ} it is shown that admissible representations of the fusion algebra are precisely representations which are restrictions of representations of the whole tube algebra. Since the tube algebra is computable in principle, this provides a method for determining admissible representations.
Using \Cref{tubeconepos} and the fact that weight $\mathbbm{1}$-admissible representations are equivalent to $\Sigma^2 \mcal C$-\textit{positive representations}, we prove the main theorem of the paper.
\begin{thm}
A rigid, semisimple C*-tensor category with simple unit object and tensor-generated by finitely many simple objects, has property (T) if and only if there exists $ k > 0 $ such that $\Delta^2 -k \Delta + \epsilon \mathbbm{1} \in \Sigma^2 \mcal C$ for every $ \epsilon > 0 $.
\end{thm}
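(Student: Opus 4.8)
The plan is to run the argument entirely through the dictionary between representations of the fusion algebra and the tube cone. The backbone is the spectral-gap characterisation of property (T) in the sense of Popa--Vaes: $\mcal C$ has property (T) precisely when there is a uniform $k>0$ so that $\operatorname{spec}(\pi(\Delta)) \subseteq \{0\} \cup [k,\infty)$ for every weight $\mathbbm{1}$-admissible representation $\pi$. Combining this with the stated equivalence (weight $\mathbbm{1}$-admissible representations are exactly the $\Sigma^2 \mcal C$-positive representations), property (T) becomes the assertion that there is a $k>0$ with $\pi(\Delta^2 - k\Delta) \geq 0$ for every $\Sigma^2 \mcal C$-positive representation $\pi$. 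The whole theorem then reduces to identifying this last condition with the membership statement $\Delta^2 - k\Delta + \epsilon\mathbbm{1} \in \Sigma^2 \mcal C$ for all $\epsilon>0$, which is a Positivstellensatz for the semi-pre-C*-algebra $(\C[\mcal C], \Sigma^2 \mcal C)$ furnished by our first main theorem.

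For the forward implication I would take the constant $k>0$ coming from property (T) and form the self-adjoint element $a = \Delta^2 - k\Delta$. By the reduction above, $\pi(a)\geq 0$ in every $\Sigma^2 \mcal C$-positive representation. Now I invoke the abstract Archimedean Positivstellensatz: because $\mathbbm{1}$ is an order unit for $\Sigma^2 \mcal C$ (so the cone is Archimedean) and $(\C[\mcal C], \Sigma^2 \mcal C)$ is a semi-pre-C*-algebra, a self-adjoint element $a$ lies in $\Sigma^2 \mcal C + \epsilon\mathbbm{1}$ for every $\epsilon>0$ if and only if $\phi(a)\geq 0$ for every state $\phi$ that is nonnegative on $\Sigma^2 \mcal C$. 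The GNS construction turns each such state into a $\Sigma^2 \mcal C$-positive cyclic representation, and conversely the vector states of such representations exhaust these states; hence the analytic hypothesis $\pi(a)\geq 0$ translates exactly into $\phi(a)\geq 0$ for all admissible states, and the Positivstellensatz yields $\Delta^2 - k\Delta + \epsilon\mathbbm{1}\in\Sigma^2 \mcal C$ for every $\epsilon>0$.

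The converse is the easy direction. Assuming $\Delta^2 - k\Delta + \epsilon\mathbbm{1}\in\Sigma^2 \mcal C$ for all $\epsilon>0$, I apply an arbitrary $\Sigma^2 \mcal C$-positive representation $\pi$ to obtain $\pi(\Delta)^2 - k\pi(\Delta) + \epsilon \geq 0$, and letting $\epsilon\to 0$ gives $\pi(\Delta)^2 - k\pi(\Delta)\geq 0$. By \Cref{tubeconepos} the Laplacian lies in $\Sigma^2 \mcal C$, so $\pi(\Delta)\geq 0$; together with the quadratic inequality this forces $\operatorname{spec}(\pi(\Delta)) \subseteq \{0\}\cup[k,\infty)$. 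Since $\pi$ ranges over all weight $\mathbbm{1}$-admissible representations, this is exactly the spectral-gap condition, i.e. property (T).

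The substantive step, and the one I expect to be the main obstacle, is the abstract Positivstellensatz invoked in the forward direction --- specifically, verifying that the states nonnegative on $\Sigma^2 \mcal C$ separate points well enough and that their GNS representations are genuinely $\Sigma^2 \mcal C$-positive (equivalently admissible). This is a Hahn--Banach separation argument in the order-unit seminorm: if $a + \epsilon\mathbbm{1}$ failed to lie in the (closed) cone, one would separate it by a state that is nonnegative on $\Sigma^2 \mcal C$ but strictly negative at $a$, whose GNS representation then contradicts $\pi(a)\geq 0$. Making this rigorous uses in an essential way both features established earlier, namely saliency of the tube cone, which underpins the separation, and the order-unit property of $\mathbbm{1}$, which guarantees that the Archimedean closure of $\Sigma^2 \mcal C$ is detected by states.
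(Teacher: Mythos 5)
Your proposal is correct and follows essentially the same route as the paper: reformulate property (T) as a spectral-gap condition on $\pi(\Delta)$ over weight $\mathbbm{1}$-admissible representations, identify these with the $\Sigma^2 \mcal C$-positive representations via \Cref{admissiblepositive}, and then convert operator-positivity of $\Delta^2 - k\Delta$ into the cone-membership statement by the Archimedean Positivstellensatz (the paper simply cites \cite[Proposition 14]{S08} for this last step, whose Hahn--Banach/GNS content you spell out). Your explicit use of $\Delta \in \Sigma^2\mcal C$ to pass from the quadratic inequality to the spectral inclusion is a point the paper leaves implicit, but it is the same argument.
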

There are at least two questions which arise from our work.
\begin{itemize}
	\item [(i)] Can one formulate a notion of residual finite dimensionality (RFD) in the context of rigid C*-tensor categories using the notion of tube cones ? The motivation for this question comes from \cite[Corollary 2]{Oz13}
	
	\item [(ii)] To study actions of RFD C*-tensor categories on unital C*-algebras (cf. \cite{CPJ})
\end{itemize}
We will address these questions in our forthcoming articles.

\vspace*{2mm}

\subsection*{Acknowledgement}
The second author is supported by BJNSF (International Scientists Project, Grant no. 1S24063).

\section{Preliminaries}\label{prelim}
In this section, we lay out the basic results on rigid C*-tensor categories and their tube algebras pertaining to our results. We refer the reader to \cite{NT} for a comprehensive study of rigid C*-tensor categories, and to \cite{GhJ} for a detailed study of tube algebras. Throughout our discussion, we assume the C*-tensor categories to be strict, essentially small and closed under finite direct sums and passage to sub-objects. For a C*-tensor category $\mcal C$, and for two objects $x, y \in \t{Ob}(\mcal C)$, we denote the space of morphisms from $x$ to $y$ by $\mcal C(x,y)$.

\subsection{Rigid C*-tensor categories}\

A rigid C*-tensor category is a C*-tensor category $\mcal C$, in which every object $X$ has a conjugate object $\ol X$ along with morphisms $R \in \mcal C \left(\mathbbm{1}, \ol X \otimes X\right)$  and $\ol R \in \mcal C \left(\mathbbm{1}, X \otimes \ol X\right)$ (where $\mathbbm{1}$ is the unit object of $\mcal C$) satisfying the following conjugate equations :
\[\left(1_{\ol X} \otimes \ol{R}^* \right) \left(R \otimes 1_{\ol X}\right) = 1_{\ol X} \ \ \ \ \t{and} \ \ \ \ \left(1_{X} \otimes R^* \right) \left(\ol R \otimes 1_{X}\right) = 1_{X} . \]
The pair $\left(R,\ol R\right)$ is called a solution to the \textit{conjugate equations} for the pair $\left(X, \ol X \right)$. 
\begin{rem}
	In a rigid C*-tensor category $\mcal C$, conjugate of an object is unique up to isomorphism. More precisely, if $\left(R,\ol R\right)$ is a solution to conjugate equations for $\left(X,\ol X\right)$ and if $\left(R',\ol R'\right)$ for $\left(X,\ol X'\right)$ then $T = \left(1_{\ol X} \otimes \ol R'^* \right) \left(R \otimes 1_{\ol X'}\right) \in \mcal C \left(\ol X', \ol X\right)$ is invertible with inverse $S = \left(1_{\ol X'} \otimes \ol R^* \right) \left(R' \otimes 1_{\ol X}\right) \in \mcal C \left(\ol X, \ol X' \right) $, and $R' = \left(T^{-1} \otimes 1 \right)R$ and $\ol R' = \left(1 \otimes T^*\right)\ol R$ (see \cite[Proposition 2.2.5]{NT}).
\end{rem}

\begin{rem}
	The rigidity condition on a C*-tensor category enforces the morphism spaces to be finite-dimensional (see \cite{NT}).
\end{rem}

\begin{defn}
	A solution $\left(R,\ol R\right)$ to the conjugate equations for the pair $\left(X, \ol X \right)$ is called \textit{standard} if it satisfies the following :
 \begin{equation}\label{stdeqn}
 	R^* (1 \otimes f) R = \ol R^* (f \otimes 1) \ol R  \ \ \ \ \t{for all} \ \ f \in \t{End}(X) .
 \end{equation}
\end{defn}
Conjugate objects and standard solutions are unique up to unitary conjugacy and satisfy various naturality properties with respect to direct sums and tensor products (see \cite{NT}). Through out our discussion, we always consider fixed standard solutions for all irreducibles, and extend those to arbitrary objects by naturality.

The standard solution $\left(R, \ol R\right)$, allows one to define a function $d : \t{Ob}(\mcal C) \to \left(0,\infty\right)$ as
\[d(X) = R^* R = \ol R^* \, \ol R . \]
We call $d(X)$ to be the \textit{categorical dimension} of $X$.

Given a standard solution $\left(R, \ol R\right)$ for $\left(X, \ol X \right)$, \Cref{stdeqn} yields a faithful, positive, tracial functional on $\t{End}(X)$, given by 
 \[\t{Tr}(f) = R^* (1 \otimes f) R = \ol R^* (f \otimes 1) \ol R  \ \ \ \ \t{for all} \ \ f \in \t{End}(X) .\]
 We call the functional $\t{Tr}$ to be the \textit{categorical trace} on $\t{End}(X)$ and is independent of the choice of a standard solution.
 
 Given an object $X \in \t{Ob}(\mcal C)$, we denote by $[X]$ to be isomorphism class of $X$ in $\mcal C$. Let $\textbf{Irr}(\mcal C)$ denote the set of isomorphism classes of simple objects. We will always work with some fixed choice of representatives for all $s \in \textbf{Irr}(\mcal C)$, and we do not distinguish between simple objects and their isomorphism classes.
 
 \vspace*{2mm} 
 
 The \textit{fusion algebra} $\C[\mcal C]$, is defined as the complex linear span of isomorphism classes of simple objects, with
 multiplication given by linear extension of the fusion rules. That means, for simple objects $X$ and $Y$, $[X] \cdot [Y] = \us{Z \in \textbf{Irr}(\mcal C)} \sum [X \otimes Y : Z] [Z]$, where $[X \otimes Y : Z]$ is the vector space dimension $\dim \mcal C (Z, X \otimes Y)$ . This algebra has a $ * $-involution defined by $[X]^* = [\ol X]$ and extended conjugate linearly. Clearly, $[\mathbbm{1}]$ is a unit of $\C[\mcal C]$. This unital $ * $-algebra is a central object of our study in this paper.

 \subsection{Tube algebras}\
 
 \comments{\red{To write few lines about tube algebras}}
 The tube algebra of a rigid C*-tensor category $\mcal C$ was introduced by A. Ocneanu \cite{O94}, in the context of subfactors. There is an one-to-one correspondence between finite-dimensional irreducible representations of the tube algebra and simple objects of the Drinfeld center $Z(\mcal C)$, and hence this has been useful for computing $Z(\mcal C)$ \cite{I}. In general, arbitrary representations of the tube algebra are in one-to-one correspondence with $Z(\t{ind}-\mcal C)$ (see \cite{NY16}).
 
 \vspace*{2mm}

 Suppose $\mcal C$ is a rigid C*-tensor category, the tube algebra $\mcal A$ of $\mcal C$ is defined by the algebraic direct sum
 \[ \mcal A = \us{i,j,k \in \textbf{Irr}(\mcal C)}{\bigoplus} \mcal C(k \otimes i, j \otimes k) . \]
 An element $x \in \mcal A$ is given by a sequence $x^k_{i,j} \in \mcal C(k \otimes i, j\otimes k)$ with only finitely many terms non-zero
 
 The $ * $-algebra structure on $\mcal A$ is defined as follows :
 \[ \left(x \cdot y \right)^k_{i,j} = \us{s,m,l \in \textbf{Irr}(\mcal C)}{\sum} \us{V \in \t{onb}(k, m \otimes l)}{\sum} (1_j \otimes V^*)(x^m_{s,j} \otimes 1_l) (1_m \otimes y^l_{i,s})(V \otimes 1_i) \]
 
 \[\left(x^*\right)^k_{i,j} = \left(\ol R^* \otimes 1_j \otimes 1_k \right)\left(1_k \otimes \left(x^{\ol k}_{j,i}\right)^* \otimes 1_k \right) \left(1_k \otimes 1_i \otimes R \right)\]
 where $x,y \in \mcal A$ and $\left(R, \ol R \right)$ are standard solutions to conjugate equations for $\left( k, \ol k \right)$.
 
 \begin{rem}
 	The tube algebra $\mcal A$ of a rigid C*-tensor category is, in general, not unital. It is unital if and only if the number of isomorphism classes of simple objects in $\mcal C$ is finite.
 \end{rem}
 
 \begin{defn}\cite{GhJ}
 	A \textit{non-degenerate right representation} of the tube algebra $\mcal A$ is a $ * $-homomorphism $\pi : \mcal A \to B\left(H\right)$ for some Hilbert space $H$ with the property that, for $\xi \in H$, $\xi \cdot \pi \left(\mcal A\right) = 0$ implies $\xi = 0$.
 \end{defn}
Following the notations in \cite{VV19}, we will call a Hilbert space $H$ to be a non-degenerate right Hilbert $\mcal A$-module if such a non-degenerate right representation of $\mcal A$ exists on $H$. 
There are many equivalent definitions of property (T) \cite{P99,PV,NY16,GhJ}. However, the one that we will state below, will be the most suitable one for our .

\begin{defn}\cite{NY16}
	Suppose $\mcal C$ is a rigid C*-tensor category with tube algebra $\mcal A$. Consider a non-degenerate right Hilbert $\mcal A$-module $H$. A vector $\xi \in H$ is \textit{invariant} if $\xi \cdot 1_{\alpha} = d(\alpha)\xi$ for all $\alpha \in \textbf{Irr}(\mcal C)$. A net $\left(\xi_i\right)_i $ in $H$ is called \textit{almost invariant} if $\xi_i \cdot 1_{\alpha} -d(\alpha)\xi_i \to 0 $ for all $\alpha \in \textbf{Irr}(\mcal C)$. 
\end{defn}
Given a finite set $F \subset \textbf{Irr}(\mcal C)$ and $\epsilon > 0$, a vector $\xi \in H$ is called $\left(F,\epsilon\right)$-invariant if \[\norm{\xi \cdot 1_{\alpha} - d(\alpha)\xi} < d(\alpha) \epsilon \norm{\xi} \ \ \ \ \t{for all} \ \ \alpha \in F \]
There exists an almost invariant net of unit vectors in $H$ if and only if $H$ admits $\left(F,\epsilon\right)$-invariant vectors for all finite sets $F \subset \textbf{Irr}(\mcal C)$ and for all $\epsilon > 0$.

\begin{defn}
	A pair $\left(F, \epsilon\right)$, with $F$ a finite subset of $\textbf{Irr}(\mcal C)$ and $\epsilon > 0 $, is called a \textit{Kazhdan pair} if any non-degenerate right Hilbert $\mcal A$-module containing a $\left(F, \epsilon\right)$-invariant vector must contain a nonzero invariant vector. We say that $\mcal C$ has \textit{property (T)} if it has a Kazhdan pair.
\end{defn}
 Thus, $\mcal C$ has property (T) if and only if a non-degenerate right Hilbert $\mcal A$-module with almost invariant vectors admits a non-zero invariant vector.

\subsection{Semi-pre-C*-algebras}\

Suppose $A$ is a unital-$ * $-algebra and consider the set of its hermitian elements $A_h \coloneqq \left\{a \in A : a^* = a\right\}$. We call a subset $A_+ \subset A_h$ to be a \textit{$ * $-positive cone} if it satisfy the following :
\begin{enumerate}
	\item $\R_{\geq 0} 1 \subset A_+$ and $\lambda a + b \in A_+$ for all $a, b \in A_+$ and for all $\lambda \in \R_{\geq 0}$.
	\item $x^* a x \in A_+ $ for all $a \in A_+$ and for all $x \in A$.
\end{enumerate}

Given a $ * $-positive cone $A_+$ of $ A $, we define the $ * $-subalgebra of bounded elements by
\[A^{\t{bdd}} = \left\{x \in A : \exists R >0 \ \ \t{such that} \ \ x^* x \leq R 1\right\}\]

\begin{defn}\cite{Oz13}
	A unital $ * $-algebra $A$ is called a \textit{semi-pre-C*-algebra} if $A = A^{\t{bdd}}$
\end{defn}    
   
\section{Main theorem}
Suppose $\mcal C$ is a rigid C*-tensor category with simple unit $\mathbbm{1}$ and having at most countably many isomorphism classes of simple object.
Let $\textbf{Irr}(\mcal C)$, or simply $ \textbf{Irr} $, denote a set of representatives of the isomorphism classes of simples in $ \mcal C $.
Consider the fusion algebra $\C [\mcal C]$ of $\mcal C$. At times, we will use the graphical calculus for C*-tensor categories, we denote objects by strands and morphisms by boxes with strands passing through it. For a detailed study about graphical calculus of rigid C*-tensor categories, we refer the reader to \cite{HV}. For notational convenience, we will drop the $[\cdot]$-notation for elements of $\C[\mcal C]$, so for an element $[x]$ in $\C[\mcal C]$ we will write it as $x$ itself.

\vspace*{4mm}

We will identify $\C [\mcal C]$ as a $ * $-subalgebra of the tube algebra $ \mcal A \mcal C $ of $ \mcal C $,  and exhibit a $ * $-positive cone in it.
Recall from \cite{O94,GhJ}, $ \mcal A \mcal C $, or simply $  \mcal A$, is defined as $ \mcal A \coloneqq \displaystyle \bigoplus_{x,y \in \textbf{Irr}} \mcal A_{x,y}$ where $ \mcal A_{x,y} \coloneqq \displaystyle \bigoplus_{w \in \textbf{Irr}} \mcal A^w_{x,y}$ and $ \mcal A^w_{x,y} \coloneqq \mcal C \left( w \otimes  x  ,  y \otimes w \right) \; $ ($ \oplus $ denotes direct sum of vector spaces).
For $ x,y \in \textbf{Irr} $ and $ z \in \t{ob} \left(\mcal C\right) $, we have canonical maps
\[
\mcal C \left( z \otimes x , y \otimes z\right) \ni \gamma \os{\displaystyle \Psi^z_{x,y}}{\longmapsto} \left(\sum_{\alpha\, \in\, \t{onb}\,  \mcal C \left( w,z \right)} \left(1_y \otimes \alpha^*\right)\ \gamma \ \left(\alpha \otimes 1_x\right) \right)_{w \in \textbf{Irr}}
\]
for $ x,y \in \textbf{Irr} $ where $ \alpha  $ varies over any orthonormal basis in $ \mcal C \left( w , z\right) $ with inner product given by $ \tau^* \sigma = \left\lab \sigma , \tau \right \rab \ 1_{w} $.
By \cite{GhJ}, any $ a \in \mcal A_{x,y} $ can be expressed as $ \Psi^z_{x,y}(\gamma) $ for some $ z \in \t{ob} \left( \mcal C \right) $ and $ \gamma \in \mcal C \left( z \otimes x , y \otimes z\right) $.
The multiplication and the $ * $-structure is given by
\[
\mcal A_{x_1 , y_1} \times \mcal A_{x_2, y_2} \ni \left(\Psi^{z_1}_{x_1,y_1} \left(\gamma_1\right) , \Psi^{z_2}_{x_2,y_2} \left(\gamma_2\right)\right) \longmapsto \delta_{x_1 = y_2} \Psi^{z_1 \otimes z_2}_{x_2,y_1} \left(\left(\gamma_1 \otimes 1_{w_2}\right)\left(1_{w_1} \otimes \gamma_2\right)\right) \in \mcal A_{x_2, y_1}
\]
\[
\t{and }\; \mcal A_{x,y} \ni \Psi^{z}_{x,y} \left(\gamma\right) \longmapsto \Psi^{\ol z}_{y,x} \left(\left(R^*_z \otimes 1_x \right)\left(1_{\ol z} \otimes \gamma^* \otimes 1_{\ol z}\right) \left(1_y \otimes \ol R_z\right)\right) \in \mcal A_{y,x}
\]
respectively where $ \left(R_z , \ol R_z\right) $ is a balanced standard solution to the conjugate equation implementing the duality of $ \left(z , \ol z\right) $.
Indeed, the above maps are well defined; (see \cite{DGG14, GhJ}).
Further, we have a faithful positive definite functional $ \Omega : \mcal A \ra \C $ defined by
\[
\mcal A_{x,y} \supset \mcal A^w_{x,y} \ni \gamma \os {\displaystyle\Omega} \longmapsto \delta_{x=y} \, \delta_{w= \mathbbm{1}} \, \t{Tr}_x (\gamma) \in \C
\]
where $ \t{Tr}_x $ denotes the canonical unnormalized trace on $ \t{End} (x) $ induced by any balanced spherical solution to the conjugate equation implementing the duality of $ x $.
\begin{rem}\label{projrem}
For $m \in \textbf{Irr}$, there is a projection $p_m \in \mcal A^{\mathbbm{1}}_{m,m}$ given by $p_m \coloneqq 1_m \in \mcal C(\mathbbm{1} \otimes m, m \otimes \mathbbm{1})$. In particular, we have $\left(p_m\right)^k_{i,j} = \delta_{k,\mathbbm{1}} \delta_{i,j} \delta_{j,m} 1_m $. Clearly, $\mcal A_{x,y} = p_y \mcal A p_x $. For every $x \in \textbf{Irr}$, the corner algebras $\mcal A_{x,x}$ are unital $ * $-algebras (where  the unit is given by $p_x$) along with a faithful positive definite functional given by $ \left. \Omega \right|_{\mcal A_{x,x}} $.
\end{rem}
The $ * $-subalgebra $ \mcal A_{\mathbbm 1, \mathbbm 1} $ turns out to be isomorphic to the fusion algebra via the map (which henceforth, will act as an identification between the two spaces)
\[
\C [\mcal C] \supset \textbf{Irr} \ni w \longmapsto \Psi^w_{\mathbbm 1,\mathbbm 1 } \left(1_w\right) \in \mcal A_{\mathbbm 1,\mathbbm 1} \; .
\]
Note that the identity $ \mathbbm 1 $ of $ \C \left[\mcal C \right] $ gets mapped to $ \Psi^\mathbbm{1}_{\mathbbm{1},\mathbbm{1}} \left(1_\mathbbm{1}\right) \in \mcal A_{\mathbbm{1},\mathbbm{1}} $.
Henceforth, we will identify the two spaces $ \C \left[\mcal C\right] $ and $ \mcal A_{\mathbbm{1},\mathbbm{1}} $ using this map.
\begin{defn}
The \textit{tube cone of $ \mcal C $} is defined as
\[
\Sigma^2 \mcal C \coloneqq \left\{ \sum_{i=1}^{n} a_i a_i^* : a_i \in \us{x_i \in \textbf{Irr}}{\bigcup} \mcal A_{x_i,\mathbbm{1}}, 1 \leq i \leq n, n \in \N \right\} \; . 
\]
\end{defn}
\begin{lem}
	$\Sigma^2 \mcal C$ is a $ * $-positive cone of $\C [\mcal C]$.
\end{lem}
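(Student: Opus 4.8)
The plan is to verify the two axioms of a $*$-positive cone directly; essentially all the work is bookkeeping of the grading $\mcal A = \bigoplus_{x,y} \mcal A_{x,y}$ under the multiplication and the $*$-operation, so I would first isolate the two graded rules used throughout: the product sends $\mcal A_{x_1,y_1}\times\mcal A_{x_2,y_2}$ into $\mcal A_{x_2,y_1}$ and is nonzero only when $x_1=y_2$, while the $*$-operation sends $\mcal A_{x,y}$ to $\mcal A_{y,x}$. With these in hand I would confirm that $\Sigma^2\mcal C$ actually sits inside $(\C[\mcal C])_h=(\mcal A_{\mathbbm 1,\mathbbm 1})_h$: for $a_i\in\mcal A_{x_i,\mathbbm 1}$ we have $a_i^*\in\mcal A_{\mathbbm 1,x_i}$, and the multiplication rule (here $x_1=x_i=y_2$) forces $a_ia_i^*\in\mcal A_{\mathbbm 1,\mathbbm 1}=\C[\mcal C]$; since $(a_ia_i^*)^*=a_ia_i^*$, each generator, hence each finite sum, is hermitian and lies in $\C[\mcal C]$.

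Next, for axiom (1), to obtain $\R_{\geq 0}\mathbbm 1\subset\Sigma^2\mcal C$ I would use that the unit object is simple, so $\mathbbm 1\in\textbf{Irr}$ and $\sqrt\lambda\,\mathbbm 1\in\mcal A_{\mathbbm 1,\mathbbm 1}\subset\bigcup_x\mcal A_{x,\mathbbm 1}$; because $\mathbbm 1^*=\mathbbm 1$ is the unit, the identity $(\sqrt\lambda\,\mathbbm 1)(\sqrt\lambda\,\mathbbm 1)^*=\lambda\mathbbm 1$ exhibits $\lambda\mathbbm 1$ as a cone element. Closure under addition is immediate, since concatenating two expressions of the form $\sum a_ia_i^*$ yields another such expression, and closure under nonnegative scalars follows from $\lambda\,a_ia_i^*=(\sqrt\lambda\,a_i)(\sqrt\lambda\,a_i)^*$ with $\sqrt\lambda\,a_i$ still in the same $\mcal A_{x_i,\mathbbm 1}$. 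Combining these gives $\lambda a+b\in\Sigma^2\mcal C$ for all $a,b\in\Sigma^2\mcal C$ and $\lambda\geq 0$.

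For axiom (2), fix $x\in\C[\mcal C]=\mcal A_{\mathbbm 1,\mathbbm 1}$ and a generator $a_ia_i^*$ with $a_i\in\mcal A_{x_i,\mathbbm 1}$. The key algebraic move is $x^*(a_ia_i^*)x=(x^*a_i)(a_i^*x)=(x^*a_i)(x^*a_i)^*$, using associativity of $\mcal A$ together with $(x^*a_i)^*=a_i^*x$. It then remains only to check the grading of the witness: since $x^*\in\mcal A_{\mathbbm 1,\mathbbm 1}$ and $a_i\in\mcal A_{x_i,\mathbbm 1}$, the multiplication rule (now $x_1=\mathbbm 1=y_2$, with output $\mcal A_{x_2,y_1}=\mcal A_{x_i,\mathbbm 1}$) gives $c_i\coloneqq x^*a_i\in\mcal A_{x_i,\mathbbm 1}$. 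Summing over $i$ shows $x^*ax=\sum_i c_ic_i^*\in\Sigma^2\mcal C$ for every $a=\sum_i a_ia_i^*$, which is exactly axiom (2).

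I expect the only genuine obstacle to be the index bookkeeping: one must read off the source/target indices correctly from the tube-algebra multiplication so that each witnessing element $c_i=x^*a_i$ really lands in the admissible family $\bigcup_x\mcal A_{x,\mathbbm 1}$, and so that the generators $a_ia_i^*$ land in $\mcal A_{\mathbbm 1,\mathbbm 1}=\C[\mcal C]$. Once the two graded rules above are secured, both axioms collapse to the formal identities displayed, and no deeper categorical input (standard solutions, the functional $\Omega$, or the rigid structure) is needed for this particular lemma.
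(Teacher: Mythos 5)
Your verification is correct and is precisely the routine check the paper leaves to the reader: the graded multiplication rule $\mcal A_{x_1,y_1}\cdot\mcal A_{x_2,y_2}\subseteq\delta_{x_1=y_2}\,\mcal A_{x_2,y_1}$ and the rule $(\mcal A_{x,y})^*=\mcal A_{y,x}$ do exactly the bookkeeping needed to place each $a_ia_i^*$ and each witness $c_i=x^*a_i$ in the right corner, and the identities $\lambda\mathbbm{1}=(\sqrt\lambda\,\mathbbm{1})(\sqrt\lambda\,\mathbbm{1})^*$ and $x^*(a_ia_i^*)x=(x^*a_i)(x^*a_i)^*$ settle both axioms. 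No discrepancy with the paper, which offers no proof beyond calling the verification straightforward.
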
  
\begin{proof}
The straight forward verification is left to the reader.
\end{proof}
We call an element $x \in \C [\mcal C]$ \textit{positive} if $x \in \Sigma^2 \mcal C$, and we denote it by $x \geq 0$.
\begin{prop}\label{salient}
$ \Sigma^2 \mcal C $ is a `salient' cone, that is, $ \Sigma^2 \mcal C\, \cap\, \left(- \Sigma^2 \mcal C\right) = \{0\}$.
\end{prop}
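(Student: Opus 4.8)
The plan is to use the faithful positive definite functional $\Omega : \mcal A \ra \C$ as a separating state on the cone. First I would record that $\Sigma^2 \mcal C$ actually lives in the corner on which $\Omega$ behaves like a faithful positive trace. If $a \in \mcal A_{x,\mathbbm 1}$, then since $\mcal A_{x,y} = p_y \mcal A p_x$ we have $a = p_{\mathbbm 1} a\, p_x$ and $a^* \in \mcal A_{\mathbbm 1, x}$, so the product $a a^*$ is supported in $\mcal A_{\mathbbm 1, \mathbbm 1} = \C[\mcal C]$. Consequently every generator of $\Sigma^2 \mcal C$, and hence $\Sigma^2 \mcal C$ itself, sits inside $\C[\mcal C] = \mcal A_{\mathbbm 1, \mathbbm 1}$, where $\Omega$ is defined and faithful by \Cref{projrem}.

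Next I would establish the two properties of $\Omega$ that do the work: nonnegativity and faithfulness on expressions of the form $a a^*$. Positive definiteness of $\Omega$ gives $\Omega(b^* b) \geq 0$ for all $b$, with equality precisely when $b = 0$. Applying this to $b = a^*$ yields $\Omega(a a^*) = \Omega(b^* b) \geq 0$, with equality if and only if $a = 0$. By additivity, $\Omega(\xi) \geq 0$ for every $\xi = \sum_{i} a_i a_i^* \in \Sigma^2 \mcal C$.

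With these in hand the argument closes quickly. Suppose $x \in \Sigma^2 \mcal C \cap \left(-\Sigma^2 \mcal C\right)$. Then $\Omega(x) \geq 0$ and also $\Omega(-x) = -\Omega(x) \geq 0$, forcing $\Omega(x) = 0$. Writing $x = \sum_{i=1}^n a_i a_i^*$ with each $a_i \in \mcal A_{x_i, \mathbbm 1}$ and each $\Omega(a_i a_i^*) \geq 0$, the vanishing $\sum_i \Omega(a_i a_i^*) = 0$ of a sum of nonnegative reals forces $\Omega(a_i a_i^*) = 0$ for every $i$. Faithfulness then gives $a_i = 0$ for all $i$, and therefore $x = 0$. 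This proves $\Sigma^2 \mcal C \cap \left(-\Sigma^2 \mcal C\right) = \{0\}$.

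The only genuine point that needs care is that the stated faithful positive definiteness of $\Omega$ controls quantities $a a^*$ rather than $a^* a$; this is just the left--right convention and is handled by the substitution $b = a^*$ above. I therefore do not expect a serious obstacle: the proof is entirely driven by the existence of a single faithful positive definite functional, and the cone being built from products $a a^*$ is exactly matched to it.
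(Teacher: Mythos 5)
Your proof is correct and follows essentially the same route as the paper: both arguments reduce salience to showing that a vanishing sum $\sum_i a_i a_i^*$ forces each $a_i=0$, and both obtain this by applying the faithful positive definite functional $\Omega$ termwise. Your write-up merely spells out the reduction and the left--right convention that the paper leaves implicit.
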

\begin{proof}
It is enough to show that for $ a_j \in \mcal A_{x_j, \mathbbm{1}} $, $ 1\leq j \leq n $, the equation $ \displaystyle \sum^n_{j=1} a_j a^*_j = 0 $ implies $ a_j = 0 $ for all $ j $.
Applying $ \Omega $ on both sides and using its faithfulness, we get the desired result.
\end{proof}
As a result, the salient cone $ \Sigma^2 \mcal C $ induces a unique partial order on $ \C \left[\mcal C \right]_\t{sa} $ (the space of self-adjoint elements  of $ \C \left[\mcal C \right] $).
A simple polarization identity argument will imply that every element of $ \C \left[ \mcal C\right]_\t{sa} $ is a difference of a pair in $ \Sigma^2 \mcal C $ (see \cite{S08}).
\begin{prop}\label{orderunit}
$ \mathbbm 1 $ turns out to be an `order unit' in the cone $ \Sigma^2 \mcal C $, that is, for every self-adjoint $ a \in \C \left[\mcal C \right]_\t{sa}$, there exists scalar $ R >0 $ such that $ a + R \mathbbm 1 \in \Sigma^2 \mcal C$.
\end{prop}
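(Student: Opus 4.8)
The plan is to reduce the proposition to a boundedness statement about the generators of $\C\left[\mcal C\right]$, and then to certify those bounds by explicit elements living in the corners of the tube algebra. First I would note that
\[
B \coloneqq \left\{ a \in \C\left[\mcal C\right]_\t{sa} : \exists\, R > 0 \ \t{with} \ R\mathbbm 1 - a \in \Sigma^2\mcal C \ \t{and} \ R\mathbbm 1 + a \in \Sigma^2\mcal C \right\}
\]
is a real linear subspace of $\C\left[\mcal C\right]_\t{sa}$ containing $\R\mathbbm 1$: it is closed under real scaling and under sums, because one may add the respective one-sided bounds, using that $\Sigma^2\mcal C$ is closed under addition and nonnegative scaling and contains $\R_{\geq 0}\mathbbm 1$. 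Since $\C\left[\mcal C\right]_\t{sa}$ is spanned over $\R$ by $\left\{ w + w^* : w \in \textbf{Irr} \right\} \cup \left\{ i(w - w^*) : w \in \textbf{Irr}\right\}$, it suffices to show that each such generator lies in $B$; summing the resulting bounds against the finitely many coefficients of a given $a$ then produces the $R$ demanded. (The statement asks only for $a + R\mathbbm 1 \in \Sigma^2\mcal C$, which is one of the two defining conditions of $a \in B$, so establishing $B = \C\left[\mcal C\right]_\t{sa}$ is more than enough.)

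Next I would isolate why upper bounds of the form $w + w^* \leq R_w\mathbbm 1$ can be certified at all. Inside the fusion-algebra corner $\mcal A_{\mathbbm 1,\mathbbm 1} = \C\left[\mcal C\right]$ alone this is problematic — indeed this is exactly the failure that forces one to pass from the canonical sum-of-squares cone to the tube cone. The new freedom is that for $a \in \mcal A_{x,\mathbbm 1}$ with $x \neq \mathbbm 1$, the product $aa^* \in \mcal A_{\mathbbm 1,\mathbbm 1}$ may have \emph{negative} fusion coefficients, which is precisely what allows one to subtract $w + w^*$ from a multiple of $\mathbbm 1$. Modelling on the group identity $2\cdot 1 - (x + x^{-1}) = (1 - x)^*(1 - x)$, I would, for a fixed simple $w$, work at the level $z = \mathbbm 1 \oplus w$ (the smallest object seeing both $\mathbbm 1$ and $w$) and build elements $a = \Psi^z_{w,\mathbbm 1}(\gamma) \in \mcal A_{w,\mathbbm 1}$ out of the inclusion and projection morphisms of the two summands of $z$ together with the standard solution $\left(R_z, \ol R_z\right)$, arranging that the ``diagonal'' contribution of $aa^*$ is a positive multiple of $\mathbbm 1$ and the ``off-diagonal'' contribution is $-(w + w^*)$. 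The imaginary generators $i(w - w^*)$ are handled identically after inserting a phase $i$ into the off-diagonal morphism.

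The computation proceeds through the explicit multiplication and $*$-formulas recorded above. Writing $aa^* = \Psi^{z \otimes \ol z}_{\mathbbm 1,\mathbbm 1}(\Theta)$ for the resulting endomorphism $\Theta \in \t{End}\left(z \otimes \ol z\right)$, the coefficient of a simple object $s$ in the fusion expansion is $\sum_\beta \beta^* \Theta \beta$, where $\beta$ runs over an orthonormal basis of $\mcal C\left(s, z \otimes \ol z\right)$; these are evaluated using the conjugate equations, sphericality, and $R_z^* R_z = d(z)$, and I expect the resulting $R_w$ to be proportional to $d(w)$.

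The main obstacle — and the feature genuinely absent in the group case — is controlling the \emph{support} of $aa^*$. Because $z \otimes \ol z$ contains, besides $\mathbbm 1, w$ and $\ol w$, the entire summand $w \otimes \ol w$, a naive $\gamma$ makes $aa^*$ acquire spurious terms $c_s s$ with $s \notin \left\{\mathbbm 1, w, \ol w\right\}$, and these cannot be tolerated: since $\Omega(s) = 0$ for every non-trivial simple $s$ while $\Omega$ is faithful and nonnegative on $\Sigma^2\mcal C$, no nonzero combination of non-trivial simples lies in the cone, so a certificate for $R_w\mathbbm 1 - (w + w^*)$ must be supported exactly on $\left\{\mathbbm 1, w, \ol w\right\}$. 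The heart of the argument is therefore to choose $\gamma$ — a vector in a fixed finite-dimensional morphism space — or a short combination $\sum_i a_i a_i^*$ of such squares, so that every unwanted isotypic component of $\Theta$ is annihilated while the $\mathbbm 1$- and $w$-components keep the signs and magnitudes prescribed above. Once all the generators are certified in this way, the subspace argument of the first step finishes the proof.
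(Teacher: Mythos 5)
Your first step --- the reduction to certifying two-sided bounds for the real generators $w+w^*$ and $i(w-w^*)$ via the subspace $B$ --- is sound, and it parallels the paper's own reduction (the paper instead uses that every self-adjoint element is a difference of two elements of $\Sigma^2\mcal C$ and then bounds each cone generator $b^*\cdot b$ above). The genuine gap is in the second half: you never actually produce the certificate. Your plan is to find a single $\gamma$ (or a ``short combination'' of squares) built from $z=\mathbbm 1\oplus w$ so that all isotypic components of $aa^*$ other than $\mathbbm 1, w,\ol w$ are annihilated, and you explicitly defer this as ``the heart of the argument.'' But that is precisely the hard step, and it is not clear it can be done in the form you describe: for instance, the natural analogue of the group identity, $\left(\mathbbm 1-\tfrac{\ol w}{d(w)}\right)\cdot\left(\mathbbm 1-\tfrac{w}{d(w)}\right)$, certifies only $w+\ol w\le d(w)\,\mathbbm 1+\tfrac{1}{d(w)}\,\ol w\cdot w$, and one is left needing an upper bound on $\ol w\cdot w=w^*\cdot w$ by a multiple of $\mathbbm 1$ witnessed in the cone --- which is exactly \Cref{alphacor}, itself a consequence of the paper's proof of \Cref{orderunit}. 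So as written the proposal reduces the proposition to an unproved claim that is essentially equivalent to it.

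The paper's mechanism avoids the support/cancellation problem entirely, and this is the idea missing from your proposal: for $b=\Psi^z_{\mathbbm 1,x}(\gamma)$ one writes $b^*\cdot b=\Psi^{z\otimes\ol z}_{\mathbbm 1,\mathbbm 1}(\sigma)$ with $\sigma$ a \emph{positive} element of the finite-dimensional C*-algebra $\t{End}(z\otimes\ol z)$, sets $\tau=\left[\norm{\sigma}1_{z\otimes\ol z}-\sigma\right]^{1/2}$ by functional calculus, and then decomposes $\Psi^{z\otimes\ol z}_{\mathbbm 1,\mathbbm 1}(\tau^2)=\norm{\sigma}\,d(z)\,\mathbbm 1-b^*\cdot b$ as the sum of squares $\sum_{y,\alpha}c^*_{y,\alpha}c_{y,\alpha}$, where $c_{y,\alpha}\in\mcal A_{\mathbbm 1,y}$ is obtained by cutting $\tau$ down by an orthonormal basis $\alpha$ of $\mcal C(y,z\otimes\ol z)$. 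There is no need for the individual squares to be supported on $\{\mathbbm 1,w,\ol w\}$ or for spurious terms to cancel by a clever choice of $\gamma$: the sum over all $y$ and $\alpha$ automatically reproduces the whole difference. If you replace your search for a tailored $\gamma$ with this square-root-plus-orthonormal-basis decomposition, your outline becomes a complete proof.
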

\begin{proof}
It is enough to prove that for all $ b = \Psi^z_{\mathbbm 1, x} (\gamma) \in \mcal A_{\mathbbm 1, x} $ where $ \gamma \in \mcal C \left( z, x \otimes z \right)$, there exists scalar $ R >0 $ such that $ R \mathbbm 1 -  \left(b^* \cdot b\right) \in \Sigma^2 \mcal C $.
Note that $ \left(b^* \cdot b\right) $ can be expressed as
$ \Psi^{z \otimes \ol z}_{\mathbbm 1 , \mathbbm 1} \left(
\raisebox{-1.15cm}{
	\begin{tikzpicture}
		\draw (-.2,0) to (-.2,-1.1);
		\draw[->] (.6,-.35) to[out=-90,in=-90] (0.2,-.35) to[out=90,in=-90] (0,.4) to[out=90,in=90] (-.6,.4) to (-.6,-1.8);
		\draw[<-] (.6,-.35) to (.6,.7);
		\draw[->] (0,-1.8) to (0,-1.5);
		\draw[->] (0,-1.5) to (0,-1.1) to[in=-90] (0.2,-.8) to[out=90,in=90] (.6,-.8) to (.6,-1.6) to[out=-90,in=-90] (1,-1.6) to (1,.7);
		\node[draw,thick,rounded corners, fill=white] at (0,0) {$ \gamma^* $};
		\node[draw,thick,rounded corners, fill=white,minimum width=18] at (0,-1.1) {$ \gamma $};
		\draw[dashed,red,thick] (-.45,-1.5) to (-.45,.45) to (.8,.45) to (.8,-1.5) to (-.45,-1.5);
\node at (-.05,-.6) {$ x $};
	\end{tikzpicture}
} \right) $
where we use balanced standard solution conugate equations implementing the duality of $ z $.
Let $ \sigma $ be the element in the finite dimensional C*-algebra $ \t{End} \left(z \otimes \ol z\right) $ appearing in the dotted box; note that $ \sigma $ is positive.
Set $ \tau \coloneqq \left[ \norm{\sigma} 1_{z \otimes \ol z} - \sigma \right]^{\frac 1 2}  \in \t{End} \left( z \otimes \ol z \right)$ and $ c_{y,\alpha} \coloneqq \Psi^z_{\mathbbm 1 , y} \left(
\raisebox{-.9 cm}{
	\begin{tikzpicture}
		\draw (.6,1.5) to (.6,.6);
		\draw (0,.9) to (0,1.5);
		\draw[->] (-.2,-.6) to (-.2,-.3);
		\draw[->] (-.2,-.3) to (-.2,.5);
		\draw (-.2,.5) to (-.2,.9);
		\draw[->] (.2,.6) to (.2,0.4);
		\draw[->] (.2,.4) to (.2,-.2) to[out=-90,in=-90] (.6,-.2) to (.6,.6);
		\node[draw,thick,rounded corners, fill=white,minimum width=20] at (0,0) {$ \tau $};
		\node[draw,thick,rounded corners, fill=white,minimum width=20] at (0,.9) {$ \alpha^* $};
\node at (-.15,1.4) {$ y $};
	\end{tikzpicture}
} \right) \in\mcal A_{\mathbbm 1, y}$ where $ y \in \textbf{Irr} $ and $ \alpha \in \t{onb}\, \mcal C \left( y, z \otimes \ol z \right)$.
Observe that only finitely many $ c_{y,\alpha} $'s are nonzero.
Now, 
\[
\Sigma^2 \mcal C \ni  \sum_{y\in \textbf{Irr}} \; \sum_{\t{onb} \mcal C (y,z \otimes \ol z)} c^*_{y,\alpha} c_{y,\alpha} = \Psi^{\ol z \otimes z}_{\mathbbm 1 ,\mathbbm 1 } \left(
\raisebox{-.5cm}{
	\begin{tikzpicture}
		\draw[->] (-.2,-.6) to (-.2,.3) to[out=90,in=90] (-.6,.3) to (-.6,-.6);
		\draw[->] (.2,.6) to (.2,-.3) to[out=-90,in=-90] (.6,-.3) to (.6,.6);
		\node[draw,thick,rounded corners, fill=white,minimum width=20] at (0,0) {$ \tau^2 $};
	\end{tikzpicture}
} \right) = \norm{\sigma} \, d(z) \, \mathbbm 1 \; - \; b^* b .
\]
\end{proof}
\begin{cor}\label{alphacor}
$z^* \cdot z \; \leq \; {d(z)}^2 \mathbbm{1}$ \; for each $z \in \normalfont \textbf{Irr}$.
\end{cor}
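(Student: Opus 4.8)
The plan is to read \Cref{alphacor} as the special case of \Cref{orderunit} in which the element $b$ is a single generator, and to track what the constant becomes in that case. Concretely, I would apply the argument in the proof of \Cref{orderunit} to $b = z$; under the identification $\C[\mcal C] \cong \mcal A_{\mathbbm 1,\mathbbm 1}$ this is $b = \Psi^z_{\mathbbm 1,\mathbbm 1}(1_z)$, i.e. the instance $x = \mathbbm 1$, $\gamma = 1_z$. With this choice $b^* \cdot b = z^* \cdot z$, and the proof of \Cref{orderunit} produces a positive $\sigma \in \t{End}(z \otimes \ol z)$ with $z^*\cdot z = \Psi^{z \otimes \ol z}_{\mathbbm 1,\mathbbm 1}(\sigma)$, together with the membership $\norm{\sigma}\, d(z)\,\mathbbm 1 - z^*\cdot z \in \Sigma^2 \mcal C$. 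So the corollary reduces to a single numerical estimate on $\norm{\sigma}$.

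The crux is therefore to show $\norm{\sigma} \leq d(z)$. When $\gamma = 1_z$ the two morphism–boxes $\gamma$ and $\gamma^*$ in the defining diagram of $\sigma$ collapse to identity strands and the internal $x$–strand disappears, so $\sigma$ is simply the positive endomorphism of $z \otimes \ol z$ obtained by closing the $z$–strand of $1_z$ against its dual $\ol z$ with the fixed standard solution $(R_z, \ol R_z)$. I would establish the bound by a direct graphical evaluation, equivalently by checking $d(z)\,1_{z\otimes \ol z} - \sigma \geq 0$ in the finite–dimensional C*-algebra $\t{End}(z \otimes \ol z)$: the factor $d(z)$ is exactly the value of the resulting closed $z$–loop, and sphericity of the chosen standard solution is what guarantees that this single "bubble'' controls the norm. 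Two consistency checks support the value: using $z^*\cdot z = \sum_{w} \dim \mcal C(w, \ol z \otimes z)\, [w]$ and matching categorical traces gives $\t{Tr}_{z\otimes\ol z}(\sigma) = \sum_w \dim\mcal C(w, \ol z \otimes z)\, d(w) = d(z)^2$, while comparing the $\mathbbm 1$–component forces $\ol R_z^*\, \sigma\, \ol R_z = d(z)$.

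Granting $\norm{\sigma} \leq d(z)$, I would conclude by the decomposition
\[
d(z)^2 \mathbbm 1 - z^*\cdot z \;=\; d(z)\bigl(d(z) - \norm{\sigma}\bigr)\mathbbm 1 \;+\; \bigl(\norm{\sigma}\, d(z)\,\mathbbm 1 - z^*\cdot z\bigr).
\]
The first summand is a nonnegative multiple of $\mathbbm 1 \in \Sigma^2 \mcal C$, and the second lies in $\Sigma^2 \mcal C$ by the proof of \Cref{orderunit}; since $\Sigma^2 \mcal C$ is a cone, the sum lies in it, giving $z^*\cdot z \leq d(z)^2 \mathbbm 1$. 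The main obstacle is precisely the estimate $\norm{\sigma} \leq d(z)$: correctly unwinding the collapsed diagram and justifying positivity of $d(z)\,1_{z\otimes\ol z} - \sigma$ is the only non-formal step, everything else being a routine specialization of the preceding proposition.
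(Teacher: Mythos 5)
Your proposal is correct and follows essentially the same route as the paper: specialize the proof of \Cref{orderunit} to $x=\mathbbm 1$, $\gamma = 1_z$, and bound $\norm{\sigma}$ by $d(z)$. The paper disposes of your one ``non-formal step'' in a single line by observing that with this choice $\sigma = \ol R_z \ol R_z^*$, so $[d(z)]^{-1}\sigma$ is a projection and hence $\norm{\sigma} = d(z)$ exactly, which is a cleaner justification than the graphical positivity check of $d(z)1_{z\otimes \ol z} - \sigma$ that you sketch.
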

\begin{proof}
Setting $ x=\mathbbm 1 $ and $ \gamma = 1_z $ in the proof of \Cref{orderunit}, $ b $ gets identified with $ z $.
Note that $ \left[d(z)\right]^{-1} \sigma $ is a projection; thus $ \norm{\sigma} = d(z) $.
This ends the proof.
\end{proof}
\begin{thm}
	$\C [\mcal C]$ is a semi-pre-C*-algebra with respect to the cone $\Sigma^2 \mcal C$.
\end{thm}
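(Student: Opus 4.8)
Writing $A \coloneqq \C[\mcal C]$ and $A^{\t{bdd}}$ for the bounded elements relative to $\Sigma^2\mcal C$, the plan is simply to prove $A^{\t{bdd}} = A$, which is the definition of a semi-pre-C*-algebra. Since $A$ is by construction the complex linear span of $\textbf{Irr}$, it suffices to check two things: that every simple object lies in $A^{\t{bdd}}$, and that $A^{\t{bdd}}$ is closed under the vector-space operations. Notably, no algebra-generation or adjoint-closure argument is needed, because the simples already span $A$ linearly.

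The first point is exactly \Cref{alphacor}: the inequality $z^* z \leq d(z)^2\, \mathbbm{1}$ witnesses $z \in A^{\t{bdd}}$ with constant $R = d(z)^2$. This is the only substantive input, and its analytic content is in turn supplied by the order-unit property of \Cref{orderunit}. For the second point, closure under scalars is immediate from $(\lambda a)^*(\lambda a) = |\lambda|^2\, a^* a$. For sums, I would use that $\Sigma^2\mcal C$ is salient (\Cref{salient}), so $\leq$ is a genuine partial order, together with the observation that $c^* c \in \Sigma^2\mcal C$ for every $c \in A$ (one realizes $c^* c = c^*(c^*)^*$ with $c^* \in \mcal A_{\mathbbm{1},\mathbbm{1}} \subset \bigcup_x \mcal A_{x,\mathbbm{1}}$, so $c^*c$ is of the generating form $aa^*$). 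Applying this to $c = a - b$ and expanding yields $a^* b + b^* a \leq a^* a + b^* b$; hence if $a^* a \leq R_1 \mathbbm{1}$ and $b^* b \leq R_2 \mathbbm{1}$ then $(a+b)^*(a+b) \leq 2(a^* a + b^* b) \leq 2(R_1 + R_2)\, \mathbbm{1}$, so $a + b \in A^{\t{bdd}}$.

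Combining the two points, $\textbf{Irr} \subset A^{\t{bdd}}$ and $A^{\t{bdd}}$ is a linear subspace, so $A = \operatorname{span}_{\C} \textbf{Irr} \subseteq A^{\t{bdd}} \subseteq A$, giving $A^{\t{bdd}} = A$. I do not expect a genuine obstacle here: all of the real work has already been absorbed into \Cref{orderunit} and \Cref{alphacor}, and what remains are the elementary facts that the bounded elements form a subspace and that the simple objects span $\C[\mcal C]$ as a vector space.
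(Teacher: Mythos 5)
Your proof is correct, but it follows a genuinely different route from the paper's. The paper handles an arbitrary element of $\C[\mcal C] = \mcal A_{\mathbbm 1,\mathbbm 1}$ in one stroke: it invokes the fact (from the preliminaries, citing [GhJ]) that any such element can be written as a single $\Psi^z_{\mathbbm 1,\mathbbm 1}(\gamma)$ for one object $z$ and $\gamma \in \t{End}(z)$, and then reruns the computation in the proof of \Cref{orderunit} with $x = \mathbbm 1$ to get $b^* \cdot b \leq \norm{\sigma}\, d(z)\, \mathbbm 1$ directly. You instead verify boundedness only for the simple objects via \Cref{alphacor} and then show that the set of bounded elements is a linear subspace, using the standard cone inequality $a^*b + b^*a \leq a^*a + b^*b$ (obtained from $(a-b)^*(a-b) \in \Sigma^2\mcal C$, which you correctly justify by noting $c^*c = c^*(c^*)^*$ is of the generating form). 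Your route buys independence from the single-object presentation of tube-algebra elements and exposes the general fact that $A^{\t{bdd}}$ is always a subspace for any $*$-positive cone; the paper's route is shorter because all the work is already packaged in \Cref{orderunit}. One small remark: you do not actually need \Cref{salient} for the additivity step --- antisymmetry of $\leq$ plays no role; closure of $\Sigma^2\mcal C$ under sums and nonnegative scalars is all that is used to add and chain the inequalities.
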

\begin{proof}
Pick a generic element $ b = \Psi^z_{\mathbbm 1,\mathbbm 1}  (\gamma) \in \mcal A_{\mathbbm 1,\mathbbm 1}$ where $ z \in \t{ob} (\mcal C) $ and  $ \gamma \in \t{End} (z) $.
We need to show $ b \in {\C [ \mcal C]}^{\t{bdd}} $, that is, $ b^* \cdot b \leq R \, \mathbbm 1 $ with respect to the tube cone
for some $ R \in [0,\infty) $.
This easily follows from the proof of \Cref{orderunit} after substituting $ x = \mathbbm 1 $.
\comments{
Using the multiplication and the $ * $-structures in $ \mcal A $ (see \red{Pending}), we may write $ a^* a =  \Psi^{\ol z \otimes z}_{\mathbbm 1,\mathbbm 1}\left(
\raisebox{-.5cm}{
\begin{tikzpicture}
\draw[->] (.5,-.3) to[out=-90,in=-90] (0,-.3) to (0,.3) to[out=90,in=90] (-.5,.3) to (-.5,-.6);
\draw[<-] (.5,-.3) to (.5,.6);
\draw[->] (.9,-.6) to (.9,.6);
\node[draw,thick,rounded corners, fill=white] at (0,0) {$ \gamma^* $};
\node[draw,thick,rounded corners, fill=white] at (.9,0) {$ \gamma $};
	\end{tikzpicture}
} \right) = \Psi^{\ol z \otimes z}_{\mathbbm 1,\mathbbm 1} \left(
\raisebox{-1.15cm}{
\begin{tikzpicture}
	\draw[->] (.6,-.3) to[out=-90,in=-90] (0,-.3) to (0,.4) to[out=90,in=90] (-.6,.4) to (-.6,-1.8);
	\draw[<-] (.6,-.3) to (.6,.7);
	\draw[->] (0,-1.8) to (0,-.8) to[out=90,in=90] (.6,-.8) to (.6,-1.6) to[out=-90,in=-90] (1,-1.6) to (1,.7);
	\node[draw,thick,rounded corners, fill=white] at (0,0) {$ \gamma^* $};
	\node[draw,thick,rounded corners, fill=white] at (0,-1.1) {$ \gamma $};
\draw[dashed,red,thick] (-.45,-1.5) to (-.45,.45) to (.8,.45) to (.8,-1.5) to (-.45,-1.5);
\end{tikzpicture}
} \right)$  where we use balanced standard solution conugate equations implementing the duality of $ z $.
Let $ \sigma $ be the element in $ \t{End} \left(z \otimes \ol z\right) $ appearing in the dotted box.
Note that $ \left[\left(\t{Tr} \left(\gamma \gamma^* \right)\right)^{-1} \sigma\right] $ is a projection in the finite dimensional C*-algebra $ \t{End} \left(z \otimes \ol z\right) $ where $ \t{Tr} $ is the trace (not normalized) implemented on $ \t{End} (z) $ by the balanced standard solution.
Set $ \tau \coloneqq \left[ \t{Tr} \left(\gamma \gamma^*\right) 1_{z \otimes \ol z} - \sigma \right]^{\frac 1 2}  \in \mcal C \left( z \otimes \ol z , z \otimes \ol z \right)$ and $ b \coloneqq \Psi^z_{\mathbbm 1 , z \otimes \ol z} \left(
\raisebox{-.5cm}{
	\begin{tikzpicture}
		\draw[->] (-.2,-.6) to 
(-.2,.6);
		\draw[->] (.2,.6) to (.2,-.2) to[out=-90,in=-90] (.6,-.2) to (.6,.6);
		\node[draw,thick,rounded corners, fill=white,minimum width=20] at (0,0) {$ \tau $};
	\end{tikzpicture}
} \right) $.
Now, 
\[
\Sigma^2 \mcal C \ni b^* b =  \Psi^{\ol z \otimes z}_{\mathbbm 1 ,\mathbbm 1 } \left(
\raisebox{-.5cm}{
	\begin{tikzpicture}
		\draw[->] (-.2,-.6) to (-.2,.3) to[out=90,in=90] (-.6,.3) to (-.6,-.6);
		\draw[->] (.2,.6) to (.2,-.3) to[out=-90,in=-90] (.6,-.3) to (.6,.6);
		\node[draw,thick,rounded corners, fill=white,minimum width=20] at (0,0) {$ \tau^2 $};
	\end{tikzpicture}
} \right) = \t{Tr} (\gamma \gamma^*) \, d(z) \, 1_{\C [\mcal C]} \; - \; a^* a .
\]
\comments{where $ \dim $ denotes the statistical dimension on the objects of $ \mcal C $.}
This concludes the proposition
}
\end{proof}
\begin{defn}
	A subset $S$ of $\textbf{Irr}$ is said to be
	\begin{itemize}
		\item [(i)] \textit{symmetric}, if, $\alpha \in S$ if and only if $\ol \alpha \in S$.
		\item [(ii)] \textit{generating}, if, for any $\alpha \in \textbf{Irr}$, $\alpha$ is a sub-object of $s_1 \otimes s_2 \otimes \cdots \otimes s_n$ for some $s_1, s_2, \cdots, s_n  \in S $. 
	\end{itemize}
\end{defn}
Suppose $\mcal C$ has a finite, symmetric, generating set $ S $. Let $\nu : S \to \left(0, \infty\right)$ be a symmetric (that is, $\nu(\alpha) = \nu(\ol \alpha)$ for all $\alpha \in S$) weight function. Given such a triplet $\left(\mcal C, S, \nu \right)$ one can form a \textit{Laplacian} element $\Delta$ in the fusion algebra $\C [\mcal C]$ given by 
\[ \Delta \coloneqq \mathbbm{1} - \frac{1}{\kappa} \us{\alpha \in S}{\sum} \nu(\alpha) \alpha, \ \ \ \ \t{where} \ \ \kappa = \us{\alpha \in S}{\sum} \nu(\alpha) d(\alpha)   \]
Our goal is to prove a version of Ozawa's criterion for property (T) for a C*-tensor category $\mcal C$ in terms of the Laplacian element $\Delta$. 
Throughout our discussion, the triplet $\left(\mcal C, S, \nu \right)$ will remain fixed.
\begin{rem}
	In the group case, positivity of $\Delta$ was inherent because the basis elements of the group algebra were invertible. We do not have this advantage in the case of the fusion algebra $\C [\mcal C]$.
\end{rem}
In the next proposition, we prove the positivity of $\Delta$ with respect to the cone $\Sigma^2 \mcal C$.
\begin{prop}
	$\Delta \geq 0$ in $\C [\mcal C]$.
\end{prop}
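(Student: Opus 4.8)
The plan is to reduce the positivity of $\Delta$ to a sharp per-generator estimate and then feed in \Cref{alphacor}. Since $S$ and $\nu$ are symmetric and $d(\alpha)=d(\ol\alpha)$, reindexing $\alpha\mapsto\ol\alpha$ gives $\us{\alpha\in S}{\sum}\nu(\alpha)\alpha = \tfrac12\us{\alpha\in S}{\sum}\nu(\alpha)(\alpha+\ol\alpha)$, so that
\[
\kappa\,\Delta \;=\; \us{\alpha\in S}{\sum}\nu(\alpha)\,d(\alpha)\,\mathbbm 1 \;-\; \tfrac12\us{\alpha\in S}{\sum}\nu(\alpha)\,(\alpha+\ol\alpha) \;=\; \tfrac12\us{\alpha\in S}{\sum}\nu(\alpha)\,\bigl(2d(\alpha)\,\mathbbm 1 - (\alpha+\ol\alpha)\bigr).
\]
As $\Sigma^2\mcal C$ is a $*$-positive cone (hence closed under nonnegative linear combinations) and $\kappa,\nu(\alpha)>0$, it suffices to prove, for each $\alpha\in S$, that the self-adjoint element $2d(\alpha)\,\mathbbm 1 - (\alpha+\ol\alpha)$ lies in $\Sigma^2\mcal C$. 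This is the categorical analogue of the elementary group identity $(1-g)^*(1-g)=2\cdot 1 - g - g^{-1}\ge 0$; the essential difference is that the constant $2d(\alpha)$ must be exactly right, since the whole purpose of the normalisation by $\kappa$ is that $\us{\alpha}{\sum}\tfrac{\nu(\alpha)}{\kappa}d(\alpha)=1$. A merely existential order-unit bound (\Cref{orderunit}) therefore does not suffice; we need the sharp bound $\alpha+\ol\alpha \le 2d(\alpha)\mathbbm 1$.

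For the sharp estimate I would complete the square against \Cref{alphacor}. Writing $e \coloneqq \sqrt{d(\alpha)}\,\mathbbm 1 - \tfrac{1}{\sqrt{d(\alpha)}}\,\alpha \in \mcal A_{\mathbbm 1,\mathbbm 1}$ and expanding the fusion product (using $\alpha^*=\ol\alpha$ and $\mathbbm 1\cdot\alpha=\alpha$), one finds $e\,e^* = d(\alpha)\,\mathbbm 1 - (\alpha+\ol\alpha) + \tfrac{1}{d(\alpha)}\,\alpha\,\alpha^*$. Hence
\[
2d(\alpha)\,\mathbbm 1 - (\alpha+\ol\alpha) \;=\; e\,e^* \;+\; \tfrac{1}{d(\alpha)}\bigl(d(\alpha)^2\,\mathbbm 1 - \alpha\,\alpha^*\bigr).
\]
The first summand is of the form $a\,a^*$ with $a=e\in\mcal A_{\mathbbm 1,\mathbbm 1}\subset\us{x}{\bigcup}\mcal A_{x,\mathbbm 1}$, so it lies in $\Sigma^2\mcal C$ by definition. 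For the second summand, \Cref{alphacor} applied to the simple object $\ol\alpha$ gives $\alpha\cdot\ol\alpha = (\ol\alpha)^*\cdot\ol\alpha \le d(\ol\alpha)^2\mathbbm 1 = d(\alpha)^2\mathbbm 1$, i.e. $d(\alpha)^2\mathbbm 1 - \alpha\,\alpha^* \in \Sigma^2\mcal C$; multiplying by the positive scalar $1/d(\alpha)$ keeps it in the cone. Adding the two shows $2d(\alpha)\mathbbm 1 - (\alpha+\ol\alpha)\in\Sigma^2\mcal C$, and summing over $\alpha\in S$ with the weights above yields $\Delta\in\Sigma^2\mcal C$.

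The one place where non-invertibility genuinely enters — and the step I expect to be the crux — is the appearance of the ``defect'' term $\tfrac1{d(\alpha)}\,\alpha\,\alpha^*$ in $e\,e^*$, which carries the nontrivial fusion channels $\alpha\,\alpha^* - \mathbbm 1 = \us{s\ne\mathbbm 1}{\sum}[\alpha\otimes\ol\alpha:s]\,s$ that have no counterpart in the group case (there $g\,g^{-1}=1$ on the nose). The identity works precisely because these extra channels are absorbed exactly by the bound of \Cref{alphacor}: the constant $d(\alpha)^2$ there is optimal (its proof produces $\norm{\sigma}=d(z)$), and it is this optimality that lets the two constants combine to the required $2d(\alpha)$ rather than to something larger. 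Thus essentially all of the analytic content is already packaged in \Cref{alphacor}, and the present proposition follows from it by the completion-of-squares identity together with the symmetrisation over $S$.
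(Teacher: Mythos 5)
Your proposal is correct and follows essentially the same route as the paper: both reduce positivity of $\Delta$ (via symmetry of $S$ and $\nu$) to the per-generator bound $\alpha+\ol\alpha\leq 2d(\alpha)\mathbbm{1}$, and both obtain that bound by expanding a completed square $\bigl(\text{a scalar multiple of }\mathbbm{1}-\tfrac{\alpha}{d(\alpha)}\bigr)$ against \Cref{alphacor}. The only difference is cosmetic: you exhibit $2d(\alpha)\mathbbm{1}-(\alpha+\ol\alpha)$ as an explicit sum of two cone elements, while the paper phrases the same computation as a chain of inequalities in the order induced by $\Sigma^2\mcal C$.
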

\begin{proof}
	Observe that we have, for all $\alpha \in S$, $$\left(1-\frac{\alpha}{d(\alpha)}\right)^* \cdot \left( 1- \frac{\alpha}{d(\alpha)}\right) \geq 0 \ .$$
Also,
	 \begin{align*}
	 	\left(1-\frac{\alpha}{d(\alpha)}\right)^* \cdot \left( 1- \frac{\alpha}{d(\alpha)}\right) &= \left(1-\frac{\ol \alpha}{d(\alpha)}\right) \cdot \left( 1- \frac{\alpha}{d(\alpha)}\right) \\ &=1 + \frac{\ol{\alpha} \cdot \alpha}{d(\alpha)^2} - \frac{1}{d(\alpha)} \left(\alpha + \ol \alpha \right) \\ &\leq 2 - \frac{1}{d(\alpha)}\left(\alpha + \ol \alpha \right) \ \ \left(\t{By} \ \ \Cref{alphacor} \right).  
	 \end{align*}
 Thus, $\alpha + \ol \alpha \leq 2 d(\alpha)$ for all $\alpha \in S$. Therefore, $\us{\alpha \in S}{\sum} \nu (\alpha) \left(\alpha + \ol \alpha \right) \leq 2 \kappa $. Since, $\nu$ is symmetric, we have that $\Delta \geq 0$ .
\end{proof}

\vspace*{2mm}

\begin{defn}\cite{GhJ}
	Consider the projection $p_{\mathbbm{1}}$ corresponding to the unit object $\mathbbm{1}$ as in \Cref{projrem}. A linear functional $\phi : \mcal A_{\mathbbm{1},\mathbbm{1}} \to \C$ is called a \textit{weight $\mathbbm{1}$-annular state} if :
	\begin{itemize}
		\item [(i)] $\phi \left(p_{\mathbbm{1}}\right) = 1 $.
		\item [(ii)] $\phi \left(f^* \cdot f \right) \geq 0$ for all $f \in \mcal A_{\mathbbm{1},m}$ and $m \in \textbf{Irr}$.
	\end{itemize}
\end{defn}

\begin{defn}
	A non-degenerate $ * $-representation $\left(\pi, H \right)$ of $\C [\mcal C]$ is said to be a \textit{weight $\mathbbm{1}$-admissible representation}, if every vector state in $\left(\pi, H \right)$ is a weight $\mathbbm{1}$-annular state.
\end{defn}

We now state a result connecting property (T), $\Delta$ and weight $\mathbbm{1}$-admissible representations.

\begin{lem}\label{admissiblelem}
	$\mcal C$ has property (T) if and only if $ 0 $ is not a limit point of the spectrum of $\pi(\Delta)$ for every weight $\mathbbm{1}$-admissible representation $\left(\pi, H \right)$ of $\C [\mcal C]$. 
\end{lem}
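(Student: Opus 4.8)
The plan is to use the correspondence from \cite{GhJ} between weight $\mathbbm 1$-admissible representations of $\C[\mcal C]$ and non-degenerate right Hilbert $\mcal A$-modules, realized by passing to the $\mathbbm 1$-corner $H = K\cdot p_{\mathbbm 1}$ of a module $K$ and conversely. Since admissibility is precisely positivity of every vector state on $\Sigma^2\mcal C$, any admissible $\pi$ sends $\Sigma^2\mcal C$ into the positive operators, so in particular $\pi(\Delta)\ge 0$. Note that ``$0$ is not a limit point of $\t{spec}(\pi(\Delta))$'' means exactly that $\pi(\Delta)$ has a spectral gap at $0$, i.e. there is $k>0$ with $\t{spec}(\pi(\Delta))\cap(0,k)=\emptyset$. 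The whole argument rests on identifying $\ker\pi(\Delta)$ with the space $H_{\t{inv}}$ of invariant vectors and then converting the Kazhdan-pair form of property (T) into such a gap.

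First I would record a sum-of-squares refinement of the positivity of $\Delta$: not discarding the square term in the previous proposition's computation gives
\[ \Delta \;\ge\; \us{\alpha\in S}{\sum}\frac{\nu(\alpha)d(\alpha)}{2\kappa}\Big(\mathbbm 1 - \tfrac{\alpha}{d(\alpha)}\Big)^*\Big(\mathbbm 1 - \tfrac{\alpha}{d(\alpha)}\Big)\;\ge\;0 \quad \t{in}\ \Sigma^2\mcal C . \]
Applying an admissible $\pi$ then makes $\langle\pi(\Delta)\eta,\eta\rangle$ comparable to the $S$-invariance defect: on one side $\langle\pi(\Delta)\eta,\eta\rangle \ge \sum_{\alpha\in S}\frac{\nu(\alpha)}{2\kappa d(\alpha)}\norm{d(\alpha)\eta-\pi(\alpha)\eta}^2$, and on the other $\langle\pi(\Delta)\eta,\eta\rangle \le \frac{1}{\kappa}\sum_{\alpha\in S}\nu(\alpha)\norm{d(\alpha)\eta-\pi(\alpha)\eta}\,\norm{\eta}$. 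In particular $\eta\in\ker\pi(\Delta)$ forces $\pi(\alpha)\eta=d(\alpha)\eta$ for every $\alpha\in S$.

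The key step is to upgrade this to full invariance, giving $\ker\pi(\Delta)=H_{\t{inv}}$. Using \Cref{alphacor} in the form $\norm{\pi(\gamma)\eta}\le d(\gamma)\norm{\eta}$ together with the fusion relation $\mu\cdot\nu=\sum_\gamma \dim\mcal C(\gamma,\mu\otimes\nu)\,\gamma$ and $\sum_\gamma \dim\mcal C(\gamma,\mu\otimes\nu)\,d(\gamma)=d(\mu)d(\nu)$, a Cauchy--Schwarz rigidity argument shows that if $\pi(\mu)\eta=d(\mu)\eta$ and $\pi(\nu)\eta=d(\nu)\eta$ then $\pi(\gamma)\eta=d(\gamma)\eta$ for every $\gamma$ appearing in $\mu\otimes\nu$; the same argument applied to $\mu$ and $\bar\mu=\mu^*$ shows the set of such directions is symmetric. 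Since $S$ is generating, inducting along tensor words in $S$ reaches every $\gamma\in\textbf{Irr}$, so a vector fixed up to $d$ by all of $S$ is invariant, whence $\ker\pi(\Delta)=H_{\t{inv}}$. I expect this rigidity/generation step to be the main obstacle, since it is exactly where the possible non-invertibility of simple objects (flagged in the preceding remark) must be overcome.

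For the forward implication, observe that $H_{\t{inv}}$ is stable under $\pi(\C[\mcal C])$ (on it each $\pi(\gamma)$ acts as the scalar $d(\gamma)$) and under adjoints, so it reduces $\pi$ and $H_{\t{inv}}^{\perp}$ is again an admissible representation; under \cite{GhJ} it corresponds to a non-degenerate right Hilbert $\mcal A$-module whose invariant vectors are $H_{\t{inv}}^{\perp}\cap H_{\t{inv}}=\{0\}$. Assuming property (T), so that $(S,\epsilon)$ is a Kazhdan pair for some $\epsilon>0$ (property (T) being independent of the chosen generating set), the Kazhdan property for this module yields $\max_{\alpha\in S}\norm{d(\alpha)\eta-\pi(\alpha)\eta}/d(\alpha)\ge\epsilon$ for every unit $\eta\in H_{\t{inv}}^{\perp}$; feeding this into the lower estimate gives $\langle\pi(\Delta)\eta,\eta\rangle\ge k$ with $k=\min_{\alpha\in S}\frac{\nu(\alpha)d(\alpha)\epsilon^2}{2\kappa}>0$, and since $\ker\pi(\Delta)=H_{\t{inv}}$ reduces $\pi(\Delta)$ this is the spectral gap $\t{spec}(\pi(\Delta))\subseteq\{0\}\cup[k,\infty)$. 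Conversely, if $\mcal C$ lacks property (T) I would direct-sum the witnesses against a cofinal family of pairs $(F,\epsilon)$ to build a single module $K$ with almost invariant unit vectors but no nonzero invariant vector; its $\mathbbm 1$-corner $H$ is admissible, the almost invariant vectors project to unit vectors $\eta_i\in H$ with $\langle\pi(\Delta)\eta_i,\eta_i\rangle\to 0$ by the upper estimate, while $\ker\pi(\Delta)=H_{\t{inv}}=\{0\}$; hence $0\in\t{spec}(\pi(\Delta))$ is not an eigenvalue and therefore a non-isolated, i.e. limit, point of the spectrum.
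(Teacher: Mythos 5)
Your proof is essentially correct, but it takes a genuinely different route from the paper: the paper's entire proof of \Cref{admissiblelem} is the single line ``this is a reformulation of [VV19, Lemma 3.3]'', whereas you reconstruct the argument from scratch using only the machinery developed in the paper itself. Your key ingredients all check out: the sum-of-squares refinement $\Delta - \sum_{\alpha\in S}\frac{\nu(\alpha)d(\alpha)}{2\kappa}\bigl(\mathbbm 1-\frac{\alpha}{d(\alpha)}\bigr)^*\bigl(\mathbbm 1-\frac{\alpha}{d(\alpha)}\bigr) = \frac{1}{2\kappa}\sum_{\alpha\in S}\frac{\nu(\alpha)}{d(\alpha)}\bigl(d(\alpha)^2\mathbbm 1-\alpha^*\cdot\alpha\bigr)$ does lie in $\Sigma^2\mcal C$ by \Cref{alphacor}; the two-sided estimates on $\langle\pi(\Delta)\eta,\eta\rangle$ in terms of the $S$-invariance defect are right; and the Cauchy--Schwarz rigidity step (using $\|\pi(\gamma)\eta\|\le d(\gamma)\|\eta\|$ from $\Sigma^2\mcal C$-positivity together with $\sum_\gamma\dim\mcal C(\gamma,\mu\otimes\nu)\,d(\gamma)=d(\mu)d(\nu)$) correctly identifies $\ker\pi(\Delta)$ with the invariant vectors, which is exactly where the non-invertibility of simple objects has to be confronted. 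What your version buys is transparency: it makes visible that the lemma rests on \Cref{alphacor} and the tube-cone positivity already established, rather than on an external result about annular states. What it costs is two borrowed facts that you assert without proof: (i) the [GJ16] correspondence between weight $\mathbbm 1$-admissible representations of the corner and non-degenerate right Hilbert $\mcal A$-modules, including the compatibility of invariant vectors on both sides (legitimate, but it is doing real work in both directions of your argument); and (ii) the parenthetical claim that property (T) is independent of the finite symmetric generating set, i.e.\ that a Kazhdan pair can be taken with first coordinate $S$. Fact (ii) is true but is itself a quantitative version of your rigidity step (approximate invariance under $S$ propagates, with controlled constants, to subobjects of tensor words in $S$), and it is precisely the kind of content that [VV19, Lemma 3.3] and [NY16] package up; as written it is the one genuine soft spot in your argument and should be either proved or cited explicitly.
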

\begin{proof}
	This is just a reformulation of \cite[Lemma 3.3]{VV19}.
\end{proof}

\comments{\red{To add one or two lines about the importance of admissible representations}\\}
Admissible representations can be seen simply as representations of the centralizer algebras which are
restrictions of representations of the whole tube algebra. Alternatively, they are representations of the corner
algebras which induce representations of the whole tube algebra. Understanding admissible representations
for all weights allows us to understand representations of the whole tube algebra.

In the next proposition we explore the relation between weight $\mathbbm{1}$-admissible representations and $\Sigma^2 \mcal C$-positive representations of $\C [\mcal C]$ on a Hilbert space $H$ (that is, those unital $ * $-representations of $\C[\mcal C]$, that map elements of $\Sigma^2 \mcal C $ to positive elements of $B(H)$).

\begin{prop}\label{admissiblepositive}
	Suppose $\left(\pi, H\right)$ is a non-degenerate $ * $-representation of $\C [\mcal C]$. The following are equivalent:
	\begin{itemize}
		\item [(i)] $\pi$ is weight $\mathbbm{1}$-admissible.
		\item [(ii)] $\pi$ is $\Sigma^2 \mcal C$-positive.
	\end{itemize}
\end{prop}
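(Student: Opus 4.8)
The plan is to recognise that conditions (i) and (ii) test the positivity of $\pi$ on \emph{exactly the same} family of elements of $\C[\mcal C] = \mcal A_{\mathbbm{1},\mathbbm{1}}$, after which the equivalence is a routine passage between ``positive on every vector state'' and ``positive as an operator''. First I would record two preliminary facts. Since $\C[\mcal C]$ is unital and $\pi$ is non-degenerate, $\pi(\mathbbm{1})$ is a projection whose range contains the dense subspace $\pi(\C[\mcal C])H$, forcing $\pi(\mathbbm{1}) = \pi(p_{\mathbbm{1}}) = 1_H$; hence every unit vector $\xi \in H$ gives a functional $\phi_\xi \coloneqq \langle \pi(\,\cdot\,)\xi, \xi\rangle$ on $\C[\mcal C]$ with $\phi_\xi(p_{\mathbbm{1}}) = 1$. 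Second, I would unwind the two definitions through the adjoint: for $a \in \mcal A_{x,\mathbbm{1}}$ we have $a^* \in \mcal A_{\mathbbm{1},x}$, and writing $f \coloneqq a^*$ gives $a a^* = f^* f$ with $f \in \mcal A_{\mathbbm{1},x}$. Thus the generating set $\{\, a a^* : a \in \bigcup_{x} \mcal A_{x,\mathbbm{1}}\,\}$ of the tube cone coincides with the set $\{\, f^* f : f \in \bigcup_{m} \mcal A_{\mathbbm{1},m}\,\}$ of test elements appearing in the definition of a weight $\mathbbm{1}$-annular state, and in both cases the product lands inside $\mcal A_{\mathbbm{1},\mathbbm{1}} = \C[\mcal C]$, where $\pi$ is defined.

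For (ii) $\Rightarrow$ (i): assuming $\pi$ is $\Sigma^2\mcal C$-positive, I would take any unit vector $\xi$ and check that $\phi_\xi$ is a weight $\mathbbm{1}$-annular state. Normalisation $\phi_\xi(p_{\mathbbm{1}}) = 1$ is the first preliminary fact, and for $f \in \mcal A_{\mathbbm{1},m}$ the element $f^* f$ lies in $\Sigma^2\mcal C$ by the identification above, so $\pi(f^* f) \geq 0$ and therefore $\phi_\xi(f^* f) = \langle \pi(f^* f)\xi, \xi\rangle \geq 0$. Hence every vector state is annular and $\pi$ is weight $\mathbbm{1}$-admissible.

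For (i) $\Rightarrow$ (ii): assuming every vector state is a weight $\mathbbm{1}$-annular state, I would fix $a \in \Sigma^2\mcal C$, say $a = \sum_{i=1}^n f_i^* f_i$ with $f_i \in \mcal A_{\mathbbm{1},m_i}$. Then $a = a^*$, so $\pi(a)$ is self-adjoint, and for every unit vector $\xi$ we have $\langle \pi(a)\xi, \xi\rangle = \sum_i \phi_\xi(f_i^* f_i) \geq 0$, since each summand is nonnegative by admissibility. As this holds for all $\xi$, the standard fact that a nonnegative quadratic form forces $\pi(a) \geq 0$ yields $\Sigma^2\mcal C$-positivity.

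The argument is essentially bookkeeping, so there is no deep obstacle; the one place demanding care is the conventions, namely verifying $(\mcal A_{x,\mathbbm{1}})^* = \mcal A_{\mathbbm{1},x}$ and that the products $a a^*$ and $f^* f$ indeed belong to the corner $\mcal A_{\mathbbm{1},\mathbbm{1}}$ (so that applying $\pi$ makes sense), which is where the matrix-unit-like fusion rule $\mcal A_{x_1,y_1}\cdot\mcal A_{x_2,y_2} \subseteq \delta_{x_1 = y_2}\,\mcal A_{x_2,y_1}$ must be used. Everything else is the elementary equivalence between pointwise positivity on vectors and operator positivity.
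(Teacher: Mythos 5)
Your proposal is correct and follows essentially the same route as the paper's proof: both rest on the observation that the generators $a\,a^*$ (with $a \in \bigcup_x \mcal A_{x,\mathbbm{1}}$) of the tube cone are exactly the elements $f^*\!\cdot\! f$ (with $f \in \bigcup_m \mcal A_{\mathbbm{1},m}$) tested in the definition of a weight $\mathbbm{1}$-annular state, after which the equivalence is the standard passage between positivity of all vector states and operator positivity. The only difference is that you spell out the bookkeeping (the matrix-unit rule placing these products in the corner $\mcal A_{\mathbbm{1},\mathbbm{1}}$, and $\pi(p_{\mathbbm{1}})=1_H$ from non-degeneracy, which gives the normalisation $\phi_\xi(p_{\mathbbm{1}})=1$) that the paper leaves implicit.
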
 
\begin{proof}	
	$\pi$ is weight $\mathbbm{1}$-admissible if and only if every vector state $\left(\pi, H \right)$ is a weight $\mathbbm{1}$-annular state. This is equivalent to,
	 $$\lab \pi\left(f^* \circ f\right)\xi, \xi \rab \geq 0, \ \text{for all} \ \xi \in H, f \in \mcal A_{\mathbbm{1},x} \ \text{and} \ x \in \textbf{Irr}.$$
	 This in turn is equivalent to $\pi(y) \geq 0$, in $B(H)$ for all $y \in \Sigma^2 \mcal C$.
\end{proof}

Using \Cref{admissiblelem} and \Cref{admissiblepositive}, we prove the following result.

\begin{thm}
	Suppose $\mcal C$ is a rigid, C*-tensor category having a finite, symmetric, generating set $S$ and a symmetric weight function $\nu : S \to \left(0, \infty\right)$. $\mcal C$ has property (T) if and only if for some $k > 0$, $\Delta^2 - k \Delta + \epsilon \mathbbm{1} \in \Sigma^2 \mcal C $ for all $\epsilon > 0$.
\end{thm}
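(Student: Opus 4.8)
The plan is to transport Ozawa's group-theoretic argument into this setting by exploiting the semi-pre-C*-algebra structure on $\C[\mcal C]$ together with the identification ``weight $\mathbbm 1$-admissible $=$ $\Sigma^2\mcal C$-positive'' from \Cref{admissiblepositive}. The engine of the proof is the Archimedean Positivstellensatz for semi-pre-C*-algebras: since $\C[\mcal C]$ is a semi-pre-C*-algebra for $\Sigma^2\mcal C$ (the preceding theorem), and since $\Sigma^2\mcal C$ is salient (\Cref{salient}) with order unit $\mathbbm 1$ (\Cref{orderunit}), I would use that for every self-adjoint $a\in\C[\mcal C]_{\t{sa}}$ one has $a+\epsilon\mathbbm 1\in\Sigma^2\mcal C$ for all $\epsilon>0$ \emph{if and only if} $\pi(a)\geq 0$ in $B(H)$ for every $\Sigma^2\mcal C$-positive representation $(\pi,H)$. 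The forward direction is immediate; the reverse is the standard Hahn--Banach/GNS separation in the order-unit space $(\C[\mcal C],\Sigma^2\mcal C,\mathbbm 1)$, and the salientness and order-unit hypotheses are precisely what make it run (cf. \cite{S08,Oz13}). I would then apply this with $a=\Delta^2-k\Delta$, which is self-adjoint since $\Delta$ is self-adjoint (as $\nu$ is symmetric).

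For the easy implication ($\Leftarrow$), suppose $\Delta^2-k\Delta+\epsilon\mathbbm 1\in\Sigma^2\mcal C$ for all $\epsilon>0$, for some fixed $k>0$. For any weight $\mathbbm 1$-admissible $(\pi,H)$, \Cref{admissiblepositive} makes $\pi$ $\Sigma^2\mcal C$-positive, so $\pi(\Delta)^2-k\pi(\Delta)+\epsilon\geq 0$ for every $\epsilon>0$, hence $\pi(\Delta)^2-k\pi(\Delta)\geq 0$. Since $\Delta\geq 0$ in the cone, $\pi(\Delta)$ is a positive operator, and the scalar fact that $t(t-k)\geq 0$ for $t\geq 0$ forces $\t{spec}(\pi(\Delta))\subset\{0\}\cup[k,\infty)$; in particular $0$ is not a limit point. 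By \Cref{admissiblelem}, $\mcal C$ has property (T). Note this direction needs neither the Positivstellensatz nor any uniformity argument, because the single $k$ serves every $\pi$ at once.

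For the hard implication ($\Rightarrow$), I would first promote the per-representation gap of \Cref{admissiblelem} to a uniform one. The plan is to form the universal weight $\mathbbm 1$-admissible representation $\pi_u$ (the direct sum of the GNS representations of all weight $\mathbbm 1$-annular states); it is again admissible, since condition (ii) in the definition of weight $\mathbbm 1$-annular state is stable under direct sums, and $\t{spec}(\pi_u(\Delta))$ is the closure of the union of $\t{spec}(\pi(\Delta))$ over all admissible $\pi$. Applying \Cref{admissiblelem} to this one admissible representation $\pi_u$ yields a single $k>0$ with $\t{spec}(\pi_u(\Delta))\cap(0,k)=\emptyset$, hence $\t{spec}(\pi(\Delta))\subset\{0\}\cup[k,\infty)$ for all admissible $\pi$ simultaneously. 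Thus $\pi(\Delta^2-k\Delta)\geq 0$ for every $\Sigma^2\mcal C$-positive $\pi$, and the Positivstellensatz returns $\Delta^2-k\Delta+\epsilon\mathbbm 1\in\Sigma^2\mcal C$ for all $\epsilon>0$.

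I expect the main obstacle to be twofold. First, the hard direction of the Positivstellensatz---recovering cone membership up to $\epsilon\mathbbm 1$ from positivity in all admissible representations---is where the structural results are consumed: one must verify that the states of the order-unit space $(\C[\mcal C],\Sigma^2\mcal C,\mathbbm 1)$ are \emph{exactly} the weight $\mathbbm 1$-annular states, so that GNS produces all and only the $\Sigma^2\mcal C$-positive representations. Second, the uniformity of the gap hinges on the universal admissible representation being genuinely admissible and on \Cref{admissiblelem} being applicable to it; this is the categorical substitute for passing to the full group C*-algebra in Ozawa's original argument, and it is the step that converts the ostensibly pointwise spectral condition into the single $k$ appearing in the statement.
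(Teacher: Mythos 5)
Your proposal is correct and follows essentially the same route as the paper: reduce via \Cref{admissiblelem} and \Cref{admissiblepositive} to positivity of $\Delta^2-k\Delta$ in every $\Sigma^2\mcal C$-positive representation, then invoke Schm\"udgen's Positivstellensatz for the order-unit cone $\left(\C[\mcal C],\Sigma^2\mcal C,\mathbbm 1\right)$ to recover $\Delta^2-k\Delta+\epsilon\mathbbm 1\in\Sigma^2\mcal C$. If anything, you make explicit two points the paper's chain of equivalences leaves implicit, namely the uniformity of $k$ across representations (via the universal admissible representation) and the use of $\Delta\geq 0$ to turn $\pi(\Delta)^2-k\pi(\Delta)\geq 0$ into the spectral-gap condition $\sigma(\pi(\Delta))\subseteq\{0\}\cup[k,\infty)$.
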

\begin{proof}
By \Cref{admissiblelem}, we have that $\mcal C$ has property (T) if and only if $ 0 $ is an isolated point of the spectrum of $\pi(\Delta)$ for every weight $\mathbbm{1}$-admissible representation $\left(\pi, H \right)$ of $\C [\mcal C]$. An easy application of \Cref{admissiblepositive} reveals that $\mcal C$ has property (T) if and only if $ 0 $ is an isolated point of the spectrum of $\pi(\Delta)$ for every $\Sigma^2 \mcal C$-positive representation $\left(\pi, H \right)$ of $\C [\mcal C]$. Thus, we have that, $\mcal C$ has property (T) if and only if for some $k > 0$, $\sigma \left(\pi(\Delta)\right) \subseteq \left\{0\right\} \cup [\,k, \infty )\,$  for every $\Sigma^2 \mcal C$-positive representation $\left(\pi, H \right)$ of $\C [\mcal C]$. This is equivalent to $\pi (\Delta^2 -k \Delta) \geq 0$ in $B(H)$ for every $\Sigma^2 \mcal C$-positive representation $\pi$ of   $\C [\mcal C]$ 
  which in turn is equivalent to  $\Delta^2 -k \Delta + \epsilon \mathbbm{1} \in \Sigma^2 \mcal C$, for all $\epsilon > 0$ (Using \cite[Proposition 14]{S08}).
  This concludes the theorem	
	
\end{proof}

\Contact


\end{document}